\definecolor{mygreen}{rgb}{0.0, 0.6, 0.1}
\definecolor{myblue}{rgb}{0.01, 0.18, 1.0}
\newtheorem{definition}{Definition}[section]
\newtheorem{theorem}[definition]{Theorem}
\newtheorem{corollary}[definition]{Corollary}
\newtheorem{proposition}[definition]{Proposition}
\newtheorem{Lemma}[definition]{Lemma}
\theoremstyle{definition}
\newtheorem{remark}[definition]{Remark}
\numberwithin{equation}{section}
\DeclarePairedDelimiter\abs{\lvert}{\rvert}%
\DeclarePairedDelimiter\norm{\lVert}{\rVert}%
\let\oldnorm\norm
\def\norm{\@ifstar{\oldnorm}{\oldnorm*}}
\newcommand{\pa} {\partial}
\newcommand{\De} {\Delta}
\newcommand{\ga} {\gamma}
\newcommand{\Ga} {\Gamma}
\newcommand{\om} {\omega}
\newcommand{\Om} {\Omega}
\newcommand{\la} {\lambda}
\newcommand{\Gr} {\nabla}
\newcommand{\no} {\nonumber}
\newcommand{\noi} {\noindent}
\newcommand{\vph} {\varphi}
\newcommand{\ep} {\epsilon}
\newcommand{\ra} {\rightarrow}
\newcommand\restr[2]{{
  \left.\kern-\nulldelimiterspace 
  #1 
  \right|_{#2} 
  }}
\def\inpr#1{\left\langle #1\right\rangle}
\def\wp{{W^{1,p}(\Om)}}
\def\wRN{{W^{1,p}(\RN)}}
\def\wpb{{W^{1,p}(B_1^c)}}
\def\wNb{{W^{1,N}(B_1^c)}}
\def\A{{\mathcal A}}
\def\C{{\mathcal C}}
\def\D{{\mathcal D}}
\def\P{{\Phi}}
\def\M{{\mathcal M}}
\def\R{{\mathbb R}}
\def\RN{{\mathbb R}^N}
\def\N{{\mathbb N}}
\def\F{{\mathcal F}}
\def\({{\Big(}}
\def\){{\Big)}}
\def\ws2{{\F_{\frac{N}{2}}}}
\def\l2{{ L^{1,\\infty}(\log L)^2}}
\def\M2{\mathcal M_{\log L}}
\def\cc{{\C_c^\infty}}
\def\dis{{\displaystyle \int_{\Omega}}}
\def\disb{{\displaystyle \int_{B^c_1}}}
\def\resb{{C_c^1(\overline{B_1^c})}}
\def\p{{p^{\prime}}}
\def\Np{{N^{\prime}}}
\def\g{{\tilde{g}}}
\def\dr{{\rm d}r}
\def\dS{{\rm d}S}
\def\ds{{\rm d}s}
\def\dt{{\rm d}t}
\def\dx{{\rm d}x}
\def\dJ{{\rm d}J}
\title{Neumann eigenvalue problems  on the exterior domains}
\author{T. V. Anoop\footnote{corresponding author and also supported by the INSPIRE Research Grant DST/INSPIRE/04/2014/001865.}\;,\;
Nirjan Biswas}
\date{}
\begin{document}
\maketitle
\begin{abstract}
For $ p\in (1, \infty)$, we consider the following weighted Neumann eigenvalue problem on $B_1^c$, the exterior of the closed unit ball in $\R^N$:
\begin{equation}\label{Neumann eqn}
\begin{aligned}
-\Delta_p \phi & = \la g |\phi|^{p-2} \phi \ \text{in}\ B^c_1,\\
 \displaystyle\frac{\pa \phi}{\pa \nu} &= 0  \ \text{on} \ \pa B_1, 
\end{aligned}
\end{equation}
where $\De_p$ is the $p$-Laplace operator  and  $g \in L^1_{loc}(B^c_1)$ is an indefinite weight function. Depending on the values of $p$ and the dimension $N$,  we take $g$ in certain Lorentz spaces or weighted Lebesgue spaces and show that \eqref{Neumann eqn}  admits an unbounded sequence of positive eigenvalues that includes a unique principal eigenvalue.  For this purpose,  we establish the compact embeddings of $ \wpb$ into $L^p(B^c_1, |g|)$ for $g$ in certain weighted Lebesgue spaces.  For $N>p$, we also provide an alternate  proof for the embedding of $\wpb$ into $L^{p^*,p}(B^c_1)$. Further, we  show that the set of all eigenvalues of \eqref{Neumann eqn} is closed. 
\end{abstract}

\noindent \textbf{Mathematics Subject Classification (2010)}: 35P30, 35J50, 35J62, 35J66.\\
\textbf{Keywords:} {Neumann eigenvalue problem,  $p$-Laplacian, Exterior domain, Principal eigenvalue, Embeddings of  $W^{1,p}(\Om)$}.
\section{Introduction}
Let $\Om$ be a smooth domain in $\R^N$ and $ g \in L_{loc}^1(\Om) $. For $p \in (1, \infty),$ we consider the following nonlinear weighted eigenvalue problem:
\begin{equation}\label{Eqn:Neumann}
\begin{aligned}
-\Delta_p \phi & = \lambda g |\phi|^{p-2} \phi \ \mbox{in}\ \Om,\\
\frac{\pa \phi}{\pa \nu} &= 0  \ \mbox{on} \ \pa \Om, 
\end{aligned}
\end{equation}
where $\Delta_p$ is the $p$-Laplace operator defined as $\Delta_p(\phi) =  {\rm div} (|\Gr \phi|^{p-2} \Gr \phi)$. We say a real number $\la$ is an eigenvalue of \eqref{Eqn:Neumann}, if there exits $ \phi \in \wp \setminus \{0\} $ satisfying the following: 
\begin{align}\label{weak}
   \dis  |\Gr \phi|^{p-2} \Gr \phi \cdot \Gr \upsilon = \la \dis g |\phi|^{p-2} \phi \upsilon, \quad
   \forall \upsilon \in \wp.
\end{align}
In this case, we also say $\phi$ is  an eigenfunction of \eqref{Eqn:Neumann} corresponding to $\la$. An eigenvalue $\la$ is called a {\it principal eigenvalue}, if there exits an eigenfunction corresponding to $\la$ that does not change sign on $\Om$. 

  If $\Om$ is bounded, then zero is always a principal eigenvalue of \eqref{Eqn:Neumann} (nonzero constants as corresponding eigenfunctions). If  $\int_{\Om} g(x) \dx \geq 0$, then zero is the only nonnegative principal eigenvalue. Thus when $\Om$ is bounded, for the existence of a positive principal eigenvalue of \eqref{Eqn:Neumann}, $ \int_{\Om} g(x) \dx < 0 $ is necessary. This condition alone does not ensure the existence of  a positive principal eigenvalue for  \eqref{Eqn:Neumann}. Under the additional assumptions such as $g \in L^{\infty}(\Om)$ (\cite{BrownLin}), $g \in C(\bar{\Om})$ (\cite{Huang}), or $g \in L^{d}(\Om)$ with $ d > \frac{N}{p}$ (\cite{HabibTsouli}), the eigenvalue problem \eqref{Eqn:Neumann} does admit a principal eigenvalue and it is unique. 

 If $\Om=\RN,$ then \eqref{weak} corresponds to the weak formulation of the Dirichlet eigenvalue problem. In this context, for $N>p$, \eqref{Eqn:Neumann} admits a positive principal eigenvalue even for certain $g$ with $\int_{\Om} g(x) \dx \ge 0 $.  For example, smooth $g$ with $g^-$ is bounded away from zero at infinity (\cite{Brown-Cosner-Fleckinger, Huang1995}), $g$ with  $g^+ \in L^{\frac{N}{2}}(\RN)$ (\cite{Allegretto,Allegretto-Huang}).  Further, if the eigenfunctions  are allowed to be in Beppo-Levi space $\D^{1,p}_0(\R^N):=$ completion of $\cc(\R^N)$ with respect to the $\norm{\Gr\, \cdot\,}_p,$ then \eqref{Eqn:Neumann} admits a positive principal eigenvalue for weights in bigger classes of function spaces, see \cite{Anoop, AMM} and the references therein.  For $N=p$, if $\int_{\R^N} g(x) \dx > 0,$  the non-existence of positive principal eigenvalue for \eqref{Eqn:Neumann} is proved  in \cite{Brown-Cosner-Fleckinger} for $p=2$ and in \cite{Huang1995} for general $p$.

 In this article, we study the existence of a positive principal eigenvalue of \eqref{Eqn:Neumann} on  $\Om=B_1^c$. The Dirichlet eigenvalue problem for $p$-Laplacian on the exterior domain is considered in \cite{ADS}.   We enlarge the class of weight functions that admits a positive principal eigenvalue by providing two distinct categories of function spaces. The first category contains certain closed subspace of the  Lorentz space $ L^{\frac{N}{p}, \infty}(B^c_1)$  for  $N>p$ and the second one  contains certain weighted Lebesgue spaces for all choices of  $p$ and $N$. 
 
We consider the following closed subspace (introduced in \cite{AMM}) of  the  Lorentz space $ L^{\frac{N}{p}, \infty}(B^c_1):$
$$ \F_{\frac{N}{p}} := \mbox{closure of} \ C_c^{\infty}(B^c_1) \ \mbox{in} \ L^{\frac{N}{p},\infty}(B^c_1).$$ For details of the space $\F_{\frac{N}{p}}$, we refer to \cite{AMM}. 
\begin{theorem}\label{Lorentz space}
Let $p\in (1,\infty)$ and $N > p$. If $g \in \F_{\frac{N}{p}}$ and $g^+ \not \equiv 0$, then $$ \la_1 = \inf \left\{ \disb |\Gr \phi|^p : \phi \in \wpb, \disb g |\phi|^p = 1 \right\} $$ is the unique positive principal eigenvalue of \eqref{Eqn:Neumann}. Furthermore, $\la_1$ is simple and isolated.
\end{theorem}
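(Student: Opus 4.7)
The plan is to realise $\la_1$ as the value of a constrained minimisation problem, identify the minimiser as a positive eigenfunction, and then invoke Picone-type arguments for simplicity, uniqueness, and isolation.

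\textbf{Step 1 (attainment).} The key ingredient is the compact embedding $\wpb \embd L^p(B^c_1,|g|)$ for $g \in \wsp$. I would derive this by approximating $g$ by elements of $\cc(B^c_1)$ in the $L^{N/p,\infty}$ quasinorm, controlling the tail via H\"older's inequality in Lorentz spaces combined with the continuous embedding $\wpb \embd L^{p^*,p}(B^c_1)$ promised in the abstract, and using the classical compact embedding $W^{1,p} \embd L^p$ on bounded sets for the leading term. A minimising sequence $\{\phi_n\} \subset \wpb$ with $\int_{B^c_1} g|\phi_n|^p = 1$ is then bounded in $\wpb$, while the same Lorentz--H\"older estimate forces $\la_1 > 0$. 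Extracting a weak limit $\phi$ and exploiting the compact embedding, one obtains $\int g|\phi|^p = 1$, so $\phi \neq 0$; weak lower semicontinuity of $\|\Gr \cdot\|_p^p$ then makes $\phi$ a minimiser.

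\textbf{Step 2 (principal eigenfunction).} A Lagrange multiplier computation (or a one-parameter variation of the constraint) shows $\phi$ satisfies \eqref{weak} with $\la = \la_1$, so $\la_1$ is an eigenvalue. Since $|\Gr |\phi|| = |\Gr \phi|$ a.e., $|\phi|$ is also a minimiser; replacing $\phi$ by $|\phi|$, the local Harnack inequality (Trudinger--Tolksdorf) together with V\'azquez's strong maximum principle yields $\phi > 0$ throughout $B^c_1$, so $\la_1$ is a positive principal eigenvalue.

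\textbf{Step 3 (simplicity, uniqueness, isolation).} Simplicity of $\la_1$ and uniqueness of the positive principal eigenvalue both follow from the Allegretto--Huang form of Picone's inequality for the $p$-Laplacian. Given any second positive principal pair $(\mu,\psi)$, testing the equation for $\phi$ against $\psi^p/(\phi + \eps)^{p-1}$ (after localising by a smooth radial cutoff to handle the unboundedness of $B^c_1$) and sending $\eps \downarrow 0$ produces $\mu \geq \la_1$; reversing the roles gives $\mu \leq \la_1$, and the Picone equality case forces $\psi \propto \phi$. For isolation, suppose $\mu_n \to \la_1$ with $\mu_n \neq \la_1$ and eigenfunctions $\psi_n$ normalised by $\int g|\psi_n|^p = 1$; each $\psi_n$ must change sign, and testing the eigenvalue equation against $\psi_n^\pm$ combined with the Lorentz--Sobolev estimate yields a uniform positive lower bound for $|\{\psi_n^\pm > 0\} \cap K|$ on some compact $K \subset B^c_1$. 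Step 1's compactness then extracts a subsequence converging to a sign-changing eigenfunction of $\la_1$, contradicting simplicity.

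The main obstacle will be Step 1's compactness: the non-compactness of $B^c_1$ together with the indefinite sign of $g$ rules out any direct Rellich-type argument, and one must use the density of $\cc(B^c_1)$ in $\wsp$ to cut off mass at infinity and reduce to the compactly supported case. The Picone manipulation in Step 3 also demands care, since the eigenfunctions decay only in an integral sense on the exterior domain, so a radial cutoff and a careful double limit (first in the cutoff, then in $\eps$) are required before the equality case can be invoked.
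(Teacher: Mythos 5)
Your overall architecture --- constrained minimisation on $N_g=\{\phi\in\wpb:\int_{B_1^c}g|\phi|^p=1\}$, compactness of the weight term via density of $\cc(B_1^c)$ in $\F_{\frac{N}{p}}$ plus the $L^{p^*,p}$ embedding, a strong maximum principle for principality, and Picone's identity for uniqueness and simplicity --- matches the paper's. But Step 1 contains a genuine gap: you assert that a minimising sequence with $\int_{B_1^c}g|\phi_n|^p=1$ is bounded in $\wpb$, and nothing you wrote delivers this. The constraint together with $J(\phi_n)\to\la_1$ controls only $\|\Gr\phi_n\|_p$; on the exterior domain there is no global Poincar\'e inequality, and the norms $\|\phi_n\|_{L^p}$ could a priori blow up (perturb $\phi_n$ by a large bump supported where $g$ is negligible, which barely moves the constraint). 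The paper closes exactly this hole with a separate Poincar\'e-type inequality on the constraint set (Lemma \ref{Poincare}), namely $\int_{B_1^c}|\Gr\phi|^p\ge\eps\int_{B_1^c}|\phi|^p$ for all $\phi\in N_g$, proved by a compactness--contradiction argument using the weak continuity of $G$ and the fact that the only gradient-free element of $\wpb$ is $0$ (constants are not $p$-integrable on a set of infinite measure). Without this, or an equivalent device, your weak limit need not exist and the remainder of Step 1 does not get off the ground. The same boundedness issue silently reappears in your isolation argument when you normalise the $\psi_n$.

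Your isolation argument also takes a different route from the paper's, and its key estimate is doubtful on an unbounded domain: a uniform positive lower bound for $|\{\psi_n^{\pm}>0\}\cap K|$ on a \emph{fixed} compact $K$ cannot be expected, since the negative parts $\psi_n^-$ may drift to infinity; moreover, with $g$ merely in $L^{\frac{N}{p},\infty}$ the Anane-type measure estimate needs the quasinorm of $g$ restricted to small sets to be small, which is precisely the defining feature of $\F_{\frac{N}{p}}$ and yields at best a bound on the total measure of the nodal set, not on its intersection with a prescribed $K$. The paper avoids this entirely: it observes that normalised eigenfunctions for $\la_n\to\la_1$ form a Palais--Smale sequence for $J$ on $N_g$ (Lemma \ref{P.S.}), extracts strong $W^{1,p}$ convergence to $\pm|\Phi|$, and then uses Egorov's theorem to produce a set $A$ of positive measure on which $\phi_n^-$ vanishes eventually, contradicting that the renormalised negative parts are themselves a minimising sequence converging to the everywhere-positive $|\Phi|$. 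If you wish to keep your route you must replace the compact-set bound by a global one derived from the $\F_{\frac{N}{p}}$ property and then justify that the limiting eigenfunction genuinely changes sign.
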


Our proof for the above theorem  uses the continuous embedding of the Sobolev space $\wpb$ into the Lorentz space $L^{p^*,p}(B^c_1)$. This embedding can be obtained from  the embedding of  $W^{1,p}(\R^N)$ into $L^{p^*,p}(\R^N)$ due to Tartar \cite{Tartar}. However, we give a simple proof for this embedding using the P\'{o}lya-Szeg\"{o} and the Hardy-Littlewood inequalities for the  Schwarz symmetrization and the Muckenhoupt condition (Theorem 2 of \cite{Muckenhoupt}) for the one-dimensional weighted Hardy inequalities.

 To state our next result, we associate a radial function with $g$ as below. For $ r \in [1, \infty),$
$$ \g(r) = \mbox{ess}\sup \{ |g(r\om)|: \om \in S^{N-1} \},$$ where the essential supremum is taken with respect to the $(N-1)$-dimensional surface measure. Since $g \in L^1_{loc}(B^c_1)$, we get $\g(r)$ is finite a.e. in $(1, \infty)$ (Theorem 2.49 of \cite{Folland}). Now we consider the following weighted Lebesgue spaces:
$${X}= \left\{\begin{array}{ll}
                    L^1((1, \infty); r^{p-1}), \, & N \neq p,\\
                     L^1((1, \infty); {(r(1 + \log r))}^{N-1}), \, &N=p.
                   \end{array}\right.$$
\begin{theorem}\label{Exterior Ball}
Let $p \in (1, \infty)$ and let $g \in L^1_{loc}(B^c_1)$ with $g^+ \not \equiv 0$.  If  $\g \in X $, then $$ \la_1 = \inf \left\{ \disb |\Gr \phi|^p : \phi \in \wpb, \disb g |\phi|^p = 1 \right\} $$
 is the unique positive principal eigenvalue of \eqref{Eqn:Neumann}. Furthermore, $\la_1$ is simple and isolated. 
\end{theorem}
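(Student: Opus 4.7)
The plan is to follow the Rayleigh-quotient method and to reduce every assertion in the theorem to one analytic input: the compact embedding
\[
\wpb \hookrightarrow\hookrightarrow L^p(B_1^c,|g|) \quad \text{whenever } \g \in X.
\]
Granting this, a minimizing sequence $\{\phi_n\}$ for $\la_1$ is bounded in $\wpb$, so along a subsequence $\phi_n \wra \phi_1$ weakly in $\wpb$ and $\phi_n \to \phi_1$ strongly in $L^p(B_1^c,|g|)$. Weak lower semicontinuity of the Dirichlet functional together with strong convergence of the constraint $\disb g|\phi_n|^p = 1$ show that $\phi_1$ attains $\la_1$, and a standard Lagrange-multiplier computation yields that $\phi_1$ satisfies \eqref{weak} with $\la=\la_1$. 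Replacing $\phi_1$ by $|\phi_1|$ is admissible (since $\abs{\Gr\abs{\phi_1}}=\abs{\Gr\phi_1}$ a.e.), after which the strong maximum principle combined with Harnack's inequality on each annulus gives $\phi_1>0$ on $B_1^c$. Hence $\la_1$ is a positive principal eigenvalue.

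The main obstacle is proving the compact embedding. I would fix $R>1$ and split
\[
\disb |g||\phi|^p \le \int_{B_R\setminus B_1}|g||\phi|^p + \int_R^{\infty}\g(r)\left(\int_{S^{N-1}}|\phi(r\om)|^p\dS(\om)\right) r^{N-1}\dr.
\]
On the bounded annulus $B_R\setminus B_1$, Rellich--Kondrachov (with the $L^1_{loc}$ weight $g$ handled via a cutoff and a standard truncation--dominated-convergence step) controls the first term. For the tail I would integrate over spheres and apply, along each radial line, a one-dimensional weighted Hardy inequality whose Muckenhoupt condition is exactly the hypothesis $\g\in X$; the cases $N\ne p$ and $N=p$ produce respectively the weights $r^{p-1}$ and $(r(1+\log r))^{N-1}$ that define the two branches of $X$. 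The resulting bound is $\eps(R)\norm{\phi}_{\wpb}^p$ with $\eps(R)\to 0$ as $R\to\infty$; the tightness then upgrades the weak convergence in $\wpb$ to strong convergence in $L^p(B_1^c,|g|)$.

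Finally, uniqueness of the positive principal eigenvalue and simplicity of $\la_1$ come from Picone's identity in D\'{\i}az--Saa form. For another positive principal pair $(\mu,\psi)$, inserting $(\psi^p-\phi_1^p)/\phi_1^{p-1}$ into the equation for $\phi_1$ (justified on $B_1^c$ by a truncation--exhaustion argument to bypass integrability issues at infinity) and the symmetric test into the equation for $\psi$ gives $(\la_1-\mu)\disb g(\phi_1^p-\psi^p)\le 0$ together with its reverse, forcing $\la_1=\mu$ and $\phi_1=c\psi$. Isolation is by contradiction: a sequence $\la_n\downarrow\la_1$ of eigenvalues with normalized sign-changing eigenfunctions $\phi_n$ converges, via the compact embedding and the $(S^+)$-property of $-\Dep$, strongly in $\wpb$ to an eigenfunction at $\la_1$, which by simplicity is a scalar multiple of $\phi_1$ and so of one sign; however, testing the equation for $\phi_n$ against $\phi_n^\pm$ combined with the compact embedding produces uniform lower bounds on $\norm{\phi_n^{\pm}}_{L^p(|g|)}$, which together with a.e.\ convergence contradicts the one-signed character of the limit.
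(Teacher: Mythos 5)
Your overall architecture --- direct minimization of the Rayleigh quotient on the constraint set, a compact weighted embedding as the sole analytic input, the strong maximum principle for positivity, Picone's identity for uniqueness and simplicity, and a compactness-based contradiction for isolation --- coincides with the paper's, and those parts of your sketch are sound (your truncation-in-$R$ proof of compactness is equivalent to the paper's device of approximating $\g$ by a compactly supported $\g_\eps$ in the $X$-norm, Proposition~\ref{compact embedding sub sup}; your isolation argument is a mild variant of the paper's Egorov-based one).

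The one step whose stated mechanism fails is the tail estimate when $N\le p$. You propose to bound
\[
\int_R^{\infty}\g(r)\Big(\int_{S^{N-1}}|\phi(r\om)|^p\,\dS\Big)r^{N-1}\,\dr
\]
by a radial weighted Hardy inequality ``whose Muckenhoupt condition is exactly $\g\in X$.'' For $N>p$ this works: writing $\phi(r\om)=-\int_r^\infty \partial_t\phi(t\om)\,\dt$ and applying H\"older against the weight $t^{N-1}$ gives $|\phi(r\om)|^p\le C\,r^{p-N}\int_r^\infty t^{N-1}|\Gr\phi(t\om)|^p\,\dt$, which is exactly Proposition~\ref{weighted Hardy subcritical}. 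But for $N\le p$ the auxiliary integral $\int_r^\infty t^{(1-N)/(p-1)}\,\dt$ diverges, and, more fundamentally, no inequality controlling $\disb \g(|x|)|\phi|^p$ by the gradient alone can hold: the constant function $\phi\equiv 1$ lies in $\wpb$, has zero gradient, and yet $\int_1^\infty \g(r)r^{N-1}\,\dr$ is finite and generally nonzero for $\g\in X$. The correct inequality must involve the full $\wpb$-norm, and the Hardy operator must be anchored at $r=1$ rather than at infinity: one writes $\phi(r\om)-\phi(\om)=\int_1^r\partial_t\phi(t\om)\,\dt$, which is what produces the factors $r^{N-1}$ (resp.\ $(r(1+\log r))^{N-1}$) in the definition of $X$, and which leaves a boundary term $\int_{S^{N-1}}|\phi(\om)|^p\,\dS$ that has to be absorbed via the trace embedding of $\wNb$ into $L^N(\pa B_1)$ (see Proposition~\ref{weighted Hardy critical}). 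Your sketch omits this trace ingredient; without it the cases $N=p$ and $N<p$ of the compact embedding, and hence of the theorem, are not established. Once it is added, the rest of your argument goes through as in the paper.
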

The Dirichlet eigenvalue problem for $g$ for which $\g$ lies in an analogous weighted Lebesgue space has been considered in \cite{ADS}.  For $\g \in X$, we show that $\wpb$ is continuously  and compactly embedded into the weighted Lebesgue space $L^p(B_1^c, |g|).$ A similar embedding for $\D^{1,p}_0(B^c_1)$ is obtained in \cite{ADS}.

We also study the existence of infinitely many positive eigenvalues of \eqref{Eqn:Neumann}. A complete characterization of the set of all eigenvalues of \eqref{Eqn:Neumann} with $p\ne 2$  is a challenging open problem. However, there are many ways to produce infinite set of eigenvalues of \eqref{Eqn:Neumann}, for example, see \cite{HabibTsouli, AnLe}. In \cite{AnLe},  An L\^{e}  proved that, for $\Om$ bounded and $g \equiv 1$, the set of all eigenvalues of \eqref{Eqn:Neumann} is closed. We extend these results as below:
\begin{theorem}\label{infinite eigenvalues}
Let $p \in (1, \infty)$ and $g$ be in Theorem \ref{Lorentz space} or  Theorem \ref{Exterior Ball}.
Then 
\begin{enumerate}[(i)]
 \item there exists a sequence of positive eigenvalues of \eqref{Eqn:Neumann} tending to infinity,
 \item the set of all eigenvalues of \eqref{Eqn:Neumann} is closed.
\end{enumerate}
 \end{theorem}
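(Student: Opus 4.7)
Both assertions rely on the compact embedding $\wpb \embd L^p(B_1^c,|g|)$ established in the proofs of Theorems \ref{Lorentz space} and \ref{Exterior Ball}, together with the $(S_+)$ property of $-\Dep$. For (i) the plan is to apply a Ljusternik--Schnirelmann minimax on the natural constraint manifold; for (ii) I will pass to the limit in the weak formulation along a sequence of eigenpairs.

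\textbf{Part (i).} I work with the $C^1$ functionals $J(\phi) := \disb|\Gr \phi|^p$ and $I(\phi) := \disb g|\phi|^p$ on $\wpb$, where the $C^1$-regularity of $I$ follows from the continuous embedding $\wpb \embd L^p(B_1^c,|g|)$. The level set $\M := \{\phi \in \wpb : I(\phi) = 1\}$ is a symmetric $C^1$-Banach manifold since $I'(\phi) \not\equiv 0$ there. Setting
\[
\la_n := \inf_{A \in \Ga_n}\, \sup_{\phi \in A} J(\phi),
\]
where $\Ga_n$ is the family of symmetric compact subsets of $\M$ of Krasnoselskii genus at least $n$ (nonempty since $\M$ is infinite-dimensional), one obtains $\la_n \to \infty$ through the Palais--Smale condition for $J|_\M$: a sequence with bounded $J$ and $J'|_\M \to 0$ admits, by the compact embedding, a subsequence converging in $L^p(B_1^c,|g|)$, which the $(S_+)$ property upgrades to strong $\wpb$-convergence. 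The standard Ljusternik--Schnirelmann deformation argument combined with the genus then supplies the critical values $\la_n$, each of which is an eigenvalue of \eqref{Eqn:Neumann} by Lagrange multipliers.

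\textbf{Part (ii).} Let $(\la_n)$ be eigenvalues with $\la_n \to \la^*$. Since $\la = 0$ is not an eigenvalue on $B_1^c$ (a corresponding eigenfunction would be a nonzero constant, which does not lie in $\wpb$), I may assume $\la^* \neq 0$ and, by replacing $g$ with $-g$ if necessary, reduce to $\la^* > 0$. Let $\phi_n$ be eigenfunctions for $\la_n$; positivity of $\la_n$ forces $\disb g|\phi_n|^p > 0$, so I normalize by $\disb g|\phi_n|^p = 1$ and obtain $\disb|\Gr\phi_n|^p = \la_n$ bounded. Granting an a priori bound $\|\phi_n\|_{\wpb} \leq C$, weak compactness yields $\phi_n \wra \phi$ in $\wpb$, and the compact embedding produces $\phi_n \to \phi$ in $L^p(B_1^c,|g|)$; in particular $\disb g|\phi|^p = 1$, so $\phi \not\equiv 0$. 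Testing the weak formulation for $\phi_n$ with $v = \phi_n - \phi$, H\"{o}lder's inequality and the strong $L^p(|g|)$-convergence drive the right-hand side to zero, whence $\disb|\Gr\phi_n|^{p-2}\Gr\phi_n \cdot \Gr(\phi_n - \phi) \to 0$. The $(S_+)$ property then yields $\Gr\phi_n \to \Gr\phi$ strongly in $L^p(B_1^c)$, which suffices to pass to the limit in the weak formulation against an arbitrary $v \in \wpb$ and conclude that $\phi$ is an eigenfunction with eigenvalue $\la^*$.

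\textbf{Main obstacle.} The crucial step is the a priori bound $\|\phi_n\|_{L^p(B_1^c)} \leq C$. If $t_n := \|\phi_n\|_p \to \infty$, then $\psi_n := \phi_n/t_n$ is bounded in $\wpb$ with $\Gr\psi_n \to 0$ in $L^p(B_1^c)$; any weak limit has vanishing gradient, hence is a constant, which must be zero since $\wpb$ contains no nonzero constants. The compact embedding then forces $\psi_n \to 0$ in $L^p(B_1^c,|g|)$, so the unit $L^p$-mass of $\psi_n$ must concentrate where $|g|$ is small. To reach a contradiction requires combining the decay encoded by the hypothesis $g \in \wsp$ (respectively $\g \in X$) with the rescaled eigenvalue equation satisfied by $\psi_n$, through a careful choice of test function exploiting the tightness of the embedding.
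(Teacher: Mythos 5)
Your overall architecture coincides with the paper's: part (i) is the Ljusternik--Schnirelmann/genus minimax on the constraint set $N_g=G^{-1}(1)$ (the paper invokes Szulkin's theorem, Theorem \ref{Szulkin}), and part (ii) is the normalize--bound--compactness--$(S_+)$ scheme, which is exactly the content of the Palais--Smale Lemma \ref{P.S.}. However, your proof is not complete: the step you yourself flag as the ``main obstacle'' --- the a priori bound $\|\phi_n\|_{L^p(B_1^c)}\le C$ for the normalized eigenfunctions --- is precisely Lemma \ref{Poincare} of the paper (the Poincar\'e-type inequality $\int_{B_1^c}|\Gr\phi|^p\ge\eps\int_{B_1^c}|\phi|^p$ on $N_g$), and your sketch of it does not close. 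After rescaling by $t_n=\|\phi_n\|_p$ you only learn that $\psi_n\to 0$ in $L^p(B_1^c,|g|)$, which is perfectly consistent with $\int_{B_1^c} g|\psi_n|^p=t_n^{-p}\to 0$; the compact embedding controls only the $|g|$-weighted norm and gives no handle on the unweighted unit $L^p$ mass, so no contradiction is reached along that route. The paper instead runs the contradiction directly at the level of the inequality on $N_g$: a sequence with $\|\phi_n\|_p=1$ and $\|\Gr\phi_n\|_p\to 0$ converges weakly to a function with vanishing gradient, hence to $0$ because $|B_1^c|=\infty$, and the compactness of $G$ then contradicts membership in $N_g$. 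You need either to cite that lemma or to reproduce its argument; without it both the Palais--Smale condition you use in part (i) and the boundedness in part (ii) are unsupported.

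Two smaller points. In part (ii) you write ``I may assume $\la^*\neq 0$,'' but this is backwards: if a sequence of eigenvalues converged to $0$, closedness would \emph{fail}, since $0$ is not an eigenvalue on $B_1^c$. You must rule this out, e.g.\ by noting that every positive eigenvalue is $\ge\la_1>0$ by the variational characterization (and symmetrically negative eigenvalues are bounded away from $0$). In part (i), the assertions that $\Ga_n\neq\emptyset$ ``since $\M$ is infinite-dimensional'' and that $\la_n\to\infty$ ``through the Palais--Smale condition'' both require arguments: nonemptiness needs compact symmetric subsets of $N_g$ (not of a sphere) of arbitrary genus, and unboundedness of the $\la_n$ needs the finiteness of the genus of the critical set at bounded levels together with a deformation argument; the paper delegates these to \cite{Anoop} and \cite{Huang1995} respectively.
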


The rest of the paper is organized as follows. In Section 2, we briefly define  symmetrization, Lorentz spaces and state the Muckenhoupt conditions for the weighted Hardy inequalities. In Section 3, we prove the required continuous embeddings and its compactness. Section 4 contains the functional settings. In the last section, we give the proofs of the above theorems.
\section{Preliminaries}\label{preliminary}
We define the one-dimensional rearrangement and then define the Lorentz spaces. Further, we state some important results such as  Muckenhoupt condition, maximum principle for $p$-Laplacian that will be used subsequently. 
\subsection{Symmetrization}
Let $ \Om \subset \R^N $ be a Lebesgue measurable set and let  $\mathcal{M}(\Om)$ be the set of all extended real valued Lebesgue measurable functions that are finite a.e. in $\Om$. Given a function $f \in \mathcal{M}(\Om)$ and for $s > 0, $ we define $E_{f}(s) = \{ x \in \Om : |f(x)| > s \}.$  The \textit{distribution function} $ \alpha_{f} $ of $f$ is defined as
$
\alpha_{f}(s) = |E_{f}(s)| \; \text{for} \; s > 0,
$
where $ |\cdot| $ denotes the Lebesgue measure. We define the \textit{one dimensional decreasing rearrangement} $ f^{*} $ of $f$ as
\begin{equation*}
		f^*(t) = \inf \{ s > 0 : \alpha_{f}(s) < t \}, \; \mbox{ for } t > 0. 
\end{equation*}
The map $f \mapsto f^*$ is not sub-additive. However, we obtain a sub-additive function from $f^*,$ namely the maximal function $f^{**}$ of $f^*$, defined by 
 \begin{equation*}
 f^{**}(t)=\frac{1}{t}\int_0^tf^*(\tau)\, {\rm d}\tau, \quad t>0.
 \end{equation*}
The { \it Schwarz symmetrization } of $f$ is defined by 
 \begin{equation*}
 f^\star(x)=f^*(\omega_N|x|^N)  \label{relation}, \quad \forall\, x\in \Omega^\star,
 \end{equation*}
 where $\omega_N$ is the measure of the unit ball in $\R^N$ and $\Omega^\star$ is the open ball centered at the origin with same measure as $\Omega.$

Next we state two important inequalities concerning the  symmetrization. For more details we refer to the books \cite{HL, PS}.
\begin{proposition}\label{HL and PS}
 Let $\Omega \subset \R^N$ with $N\ge 2$.
 \begin{enumerate}[(a)]
 \item Hardy-Littlewood inequality: Let $f$ and $g$ be nonnegative measurable functions. Then
 $$\int_{\Omega} f(x)g(x) \dx \leq \int_{\Omega^\star}f^\star(x)g^\star(x) \dx = \int_0^{|\Omega|}f^*(t) g^*(t)\dt.$$
 \item P\'{o}lya-Szeg\"{o} inequality: 
Let $\phi \in \wRN$. Then  
$$  \int_{\Omega^\star}|\Gr \phi^\star(x)|^p \dx = \displaystyle N^p \om_N^{\frac{p}{N}} \int_{0}^{\infty} s^{(p - \frac{p}{N})} | {\phi^*}^{\prime}(s)|^p ds \leq \int_{\RN} | \Gr \phi(x)|^p \dx. $$
 \end{enumerate}	
\end{proposition}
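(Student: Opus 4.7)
The plan is to prove the two classical rearrangement inequalities by reducing each to a one-dimensional statement and then invoking standard tools: the layer-cake formula for the Hardy--Littlewood inequality, and the coarea formula combined with the classical isoperimetric inequality for the P\'olya--Szeg\"o inequality.

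For (a), I would first use the layer-cake representations $f(x) = \int_0^{\infty} \mathbf{1}_{E_f(s)}(x)\,\ds$ and $g(x)=\int_0^{\infty}\mathbf{1}_{E_g(t)}(x)\,\dt$ and apply Fubini to write
$$ \int_{\Omega} f(x)g(x)\,\dx = \int_0^{\infty}\!\!\int_0^{\infty} |E_f(s)\cap E_g(t)|\,\ds\,\dt. $$
The key observation is that $|E_f(s)\cap E_g(t)| \le \min(\alpha_f(s),\alpha_g(t)) = |E_{f^{\star}}(s)\cap E_{g^{\star}}(t)|$, since $E_{f^{\star}}(s)$ and $E_{g^{\star}}(t)$ are concentric balls having the same measures as $E_f(s)$ and $E_g(t)$ respectively. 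Reversing the layer-cake manipulation gives the first inequality. The equality $\int_{\Omega^{\star}} f^{\star}g^{\star}\,\dx = \int_0^{|\Omega|}f^*(t)g^*(t)\,\dt$ follows by the change of variables $t=\omega_N|x|^N$ from the definition $f^{\star}(x)=f^*(\omega_N|x|^N)$.

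For (b), the equality is a direct computation: for $\phi^{\star}(x)=\phi^*(\omega_N|x|^N)$ one has $|\Gr\phi^{\star}(x)| = N\omega_N|x|^{N-1}|{\phi^*}'(\omega_N|x|^N)|$, so changing to polar coordinates and then to $s=\omega_N|x|^N$ yields the middle integral. The inequality is the deep part. By density it suffices to consider $\phi \in C_c^{\infty}(\R^N)$. The coarea formula gives, for a.e.\ level $t$,
$$ -\al_\phi'(t) = \int_{\{|\phi|=t\}} \frac{1}{|\Gr\phi|}\,{\rm d}\mathcal{H}^{N-1}, $$
and the isoperimetric inequality yields $\mathcal{H}^{N-1}(\{|\phi|=t\}) \ge N\omega_N^{1/N}\al_\phi(t)^{(N-1)/N}$. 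Applying H\"older's inequality in the form
$$ \mathcal{H}^{N-1}(\{|\phi|=t\})^p \le \left(\int_{\{|\phi|=t\}} |\Gr\phi|^{p-1}\,{\rm d}\mathcal{H}^{N-1}\right)\left(\int_{\{|\phi|=t\}} \frac{1}{|\Gr\phi|}\,{\rm d}\mathcal{H}^{N-1}\right)^{p-1} $$
and combining with the two displays above gives a pointwise (in $t$) lower bound for $\int_{\{|\phi|=t\}}|\Gr\phi|^{p-1}\,{\rm d}\mathcal{H}^{N-1}$ in terms of $\al_\phi(t)$ and $-\al_\phi'(t)$. Integrating over $t\in(0,\infty)$ and using the coarea formula once more to recognise the left-hand side as $\int_{\R^N}|\Gr\phi|^p\,\dx$, together with the identity $\al_{\phi^{\star}}=\al_\phi$ and the change of variable $s=\al_\phi(t)$, produces the middle expression as a lower bound.

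The main obstacle is the regularity handling in (b): the coarea formula and the isoperimetric inequality apply cleanly for smooth $\phi$ and a.e.\ level, so one must exclude the critical values and then pass to the limit along a smooth approximation in $W^{1,p}(\R^N)$, using that the Schwarz symmetrization is a contraction on $L^p$ to control $\phi^{\star}$. Everything else is bookkeeping with the layer-cake and coarea formulas.
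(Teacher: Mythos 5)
The paper offers no proof of this proposition: both inequalities are stated as classical preliminaries (Hardy--Littlewood, and P\'olya--Szeg\"o in the quantitative form going back to Talenti) and are only used as black boxes, so there is no in-paper argument to compare against. Your sketch follows the standard route for each and is essentially correct. For (a), the layer-cake reduction to $|E_f(s)\cap E_g(t)|\le\min(\alpha_f(s),\alpha_g(t))$, combined with the observation that the superlevel sets of $f^\star$ and $g^\star$ are concentric balls of measures $\alpha_f(s)$ and $\alpha_g(t)$ so that the bound is attained after symmetrization, is exactly the textbook argument; the final equality is just the change of variables $t=\om_N|x|^N$. For (b), the coarea/isoperimetric/H\"older chain is Talenti's proof. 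Two points deserve to be made explicit rather than left to ``regularity handling'': first, at levels containing critical points the coarea identity only gives $-\alpha_{\phi}'(t)\ge\int_{\{|\phi|=t\}\cap\{\Gr\phi\ne0\}}|\Gr\phi|^{-1}\,{\rm d}\mathcal{H}^{N-1}$, but this inequality points in the favorable direction for your lower bound on $\int_{\{|\phi|=t\}}|\Gr\phi|^{p-1}\,{\rm d}\mathcal{H}^{N-1}$; second, the substitution $s=\alpha_{\phi}(t)$ relating $\int\alpha_{\phi}(t)^{p(N-1)/N}(-\alpha_{\phi}'(t))^{1-p}\,\dt$ to $\int_0^{\infty}s^{p-\frac{p}{N}}|{\phi^*}'(s)|^p\,\ds$ requires the standard lemma on generalized inverses when $\alpha_{\phi}$ has jumps or flat parts, and again delivers the needed direction of inequality. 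With those two remarks the acknowledged gaps are fillable by routine arguments, so the proposal is a sound proof outline of a result the paper simply quotes.
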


\subsection{Lorentz Space}
The Lorentz spaces are introduced by Lorentz in \cite{Lorentz} and these are refinements of the classical Lebesgue spaces. For more details on Lorentz spaces, we refer to the books \cite{Adams,EdEv}. 

Let $\Om \subset \R^N$ be an open set. Let $f \in \mathcal{M}(\Omega)$ and $(p,q) \in [1,\infty)\times[1,\infty]$. Consider the following quantity:
 \begin{align*} 
 |f|_{(p,q)} := \norm{t^{\frac{1}{p}-\frac{1}{q}} f^{*} (t)}_{{L^q((0,\infty))}}
 =\left\{\begin{array}{ll}
 \left(\displaystyle\int_0^\infty \left[t^{\frac{1}{p}-\frac{1}{q}} {f^{*}(t)}\right]^q \dt \right)^{\frac{1}{q}}, \; 1\leq q < \infty; \vspace{4mm}\\ 
 \displaystyle\sup_{t>0}t^{\frac{1}{p}}f^{*}(t), \; q=\infty.
 \end{array} 
 \right.
 \end{align*}
The Lorentz space $L^{p,q}(\Om)$ is defined as
 \[ L^{p,q}(\Om) := \left \{ f\in \mathcal{M}(\Om): \,   |f|_{(p,q)}<\infty \right \},\]
 where $ |f|_{(p,q)}$ is  a complete quasi norm on $L^{p,q}(\Om).$ For  $p\in (1,\infty)$, $L^{p,p}(\Omega)=L^p(\Omega)$ and  $ L^{p,\infty}(\Om)$ coincides with the weak-$L^p$ space 
 $:=\left \{ f \in \mathcal{M}(\Om):\sup_{s>0}s(\alpha_f(s))^{\frac{1}{p}}<\infty \right \}.$ 
 Indeed, one can define a norm on $L^{p,q}(\Om)$ for certain values of $p$ and $q$ as in the following proposition (Lemma 3.4.6 of \cite{EdEv}).
\begin{proposition}\label{equivallence}
 For $(p,q) \in (1,\infty)\times[1,\infty]$, let  
 $$ \norm{f}_{(p,q)}:= \norm{t^{\frac{1}{p}-\frac{1}{q}} f^{**} (t)}_{{L^q((0,\infty))}}.$$  Then $\norm{f}_{(p,q)}$ is a norm in $L^{p,q}(\Om)$ and it is equivalent to the quasi-norm $|f|_{(p,q)}$.
\end{proposition}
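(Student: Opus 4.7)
The plan is to verify the four norm axioms for $\|\cdot\|_{(p,q)}$ and then establish two-sided comparisons against the quasi-norm $|\cdot|_{(p,q)}$, with the key inputs being the sub-additivity of the maximal function $f\mapsto f^{**}$ and a weighted one-dimensional Hardy inequality (a model case of the Muckenhoupt condition mentioned in the introduction).

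First I would dispose of the easy norm axioms. Nonnegativity is clear; positive homogeneity follows from $(\lambda f)^{*} = |\lambda|\, f^{*}$, hence $(\lambda f)^{**} = |\lambda|\, f^{**}$; and $\|f\|_{(p,q)} = 0$ forces $f^{**}\equiv 0$, which implies $f^{*}\equiv 0$ (since $f^{*}\le f^{**}$) and therefore $f = 0$ a.e. The nontrivial axiom is the triangle inequality: I would deduce it from the sub-additivity $(f+g)^{**}(t)\le f^{**}(t) + g^{**}(t)$ combined with Minkowski's inequality in the weighted space $L^q((0,\infty),\, t^{q/p - 1}\,dt)$ (or $L^\infty$ when $q=\infty$). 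Sub-additivity of $f^{**}$ is the classical Hardy--Littlewood--P\'olya identity
$$t\, f^{**}(t) = \sup\Bigl\{\int_E |f|\,dx : E\subset\Omega,\ |E| = t\Bigr\}$$
combined with $\int_E |f+g| \le \int_E |f| + \int_E |g|$. This is precisely the reason for passing from $f^{*}$, which is well known \emph{not} to be sub-additive, to its average $f^{**}$.

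For the equivalence, the bound $|f|_{(p,q)}\le \|f\|_{(p,q)}$ is immediate: since $f^{*}$ is non-increasing, $f^{*}(t) \le \tfrac{1}{t}\int_0^t f^{*}(s)\,ds = f^{**}(t)$. The reverse bound $\|f\|_{(p,q)} \le C(p,q)\,|f|_{(p,q)}$ is the main step, and it is exactly here that the hypothesis $p > 1$ becomes essential. For $q \in [1,\infty)$ I would apply the classical weighted Hardy inequality: for nonnegative $h$ and $\alpha > 0$,
$$\int_0^\infty \Bigl(\int_0^t h(s)\,ds\Bigr)^q t^{-\alpha-1}\,dt \le \Bigl(\frac{q}{\alpha}\Bigr)^q \int_0^\infty (t\,h(t))^q\, t^{-\alpha-1}\,dt.$$
Taking $h = f^{*}$ and $\alpha = q(1 - 1/p)$, which is positive precisely because $p > 1$, this rearranges to
$$\left(\int_0^\infty \bigl[t^{1/p - 1/q} f^{**}(t)\bigr]^q\,dt\right)^{1/q} \le \frac{p}{p-1}\left(\int_0^\infty \bigl[t^{1/p - 1/q} f^{*}(t)\bigr]^q\,dt\right)^{1/q}.$$
For $q = \infty$ the same bound is a one-line direct computation:
$$t^{1/p} f^{**}(t) = t^{1/p - 1}\int_0^t s^{-1/p}\bigl(s^{1/p} f^{*}(s)\bigr)\,ds \le |f|_{(p,\infty)}\cdot t^{1/p - 1}\int_0^t s^{-1/p}\,ds = \tfrac{p}{p-1}\,|f|_{(p,\infty)}.$$

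The main obstacle I anticipate is packaging the Hardy step correctly; the hypothesis $\alpha > 0$, equivalently $p > 1$, is exactly what makes the weight $s^{-1/p}$ integrable near the origin when tested against the bound on $s^{1/p} f^{*}(s)$. Without it the maximal function $f^{**}$ can be identically infinite even for $f \in L^{p,q}$, so the proposition genuinely requires $p > 1$. Everything else, including verifying that the resulting topology coincides with the one induced by $|\cdot|_{(p,q)}$, is then formal bookkeeping.
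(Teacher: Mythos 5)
The paper does not prove this proposition; it is quoted verbatim from Lemma 3.4.6 of Edmunds--Evans, so there is no in-paper argument to compare against. Your proof is correct and is essentially the standard one from that reference: sub-additivity of $f^{**}$ (via the identity $t\,f^{**}(t)=\sup\{\int_E|f|:|E|\le t\}$ --- note the supremum should be over $|E|\le t$ rather than $|E|=t$ to cover $t>|\Omega|$, though this does not affect sub-additivity) gives the triangle inequality, the monotonicity of $f^*$ gives $|f|_{(p,q)}\le\|f\|_{(p,q)}$, and the classical weighted Hardy inequality with exponent $\alpha=q/p'>0$ gives the reverse bound with constant $p'$, including the correct one-line computation for $q=\infty$.
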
 
\subsection{Some important results}
The following result is a sufficient condition for the one-dimensional weighted Hardy inequalities (4.17 of \cite{KMP}).
\begin{proposition}[Muckenhoupt condition]\label{Mucken}
Let $u,v$ be nonnegative measurable functions such that $v > 0$. Let $p\in(1,\infty)$ and let $\p$ be the H\"older conjugate of $p.$ If
$$ \displaystyle A = \sup_{t > 0} \left( \int_{0}^t u(s) \ds \right)^{\frac{1}{p}} \left( \int_{t}^{\infty} v(s)^{1 - \p} \ds \right)^{\frac{1}{\p}} < \infty, $$
then 
\begin{align}\label{Mucken1}
\displaystyle \left( \int_{0}^{\infty} \left | \int_{s}^\infty f(t) dt \right|^p u(s) \ds \right)^{\frac{1}{p}} \leq p^{\frac{1}{p}} (\p)^{\frac{1}{\p}}A \left( \int_{0}^\infty | f(s) |^p v(s) \ds \right)^{\frac{1}{p}}
\end{align}
holds for any measurable function $f$ on $(0,\infty).$ 
\end{proposition}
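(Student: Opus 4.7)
The plan is to prove this classical weighted Hardy inequality (Muckenhoupt's theorem for the Hardy operator) by combining integration by parts with a well-chosen H\"older estimate. Without loss of generality assume $f\ge 0$, and set
\[
F(s) = \int_s^\infty f(t)\,\dt,\qquad U(t) = \int_0^t u(s)\,\ds,\qquad V(t) = \int_t^\infty v(s)^{1-\p}\,\ds,
\]
so that the Muckenhoupt hypothesis reads $U(t)^{1/p}V(t)^{1/\p}\le A$ for every $t>0$. Equivalently, $U(t)\le A^{p} V(t)^{1-p}$ pointwise.

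First I would exploit the absolute continuity of $F$ (with $F'=-f$) to obtain the identity
\[
F(s)^p \;=\; p\int_s^\infty F(t)^{p-1}f(t)\,\dt,
\]
then multiply by $u(s)$, integrate over $s\in(0,\infty)$, and swap the order of integration by Tonelli's theorem (legitimate since every integrand is nonnegative) to recast the left-hand side of the target inequality as
\[
\int_0^\infty F(s)^p u(s)\,\ds \;=\; p\int_0^\infty F(t)^{p-1}f(t)\,U(t)\,\dt.
\]

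Next I would apply H\"older's inequality with conjugate exponents $\p$ and $p$ to the right-hand integrand, splitting it in the form
\[
F(t)^{p-1}f(t)U(t) \;=\; \bigl[F(t)^{p-1} u(t)^{1/\p}\bigr]\cdot \bigl[f(t) v(t)^{1/p}\bigr]\cdot \bigl[u(t)^{-1/\p} v(t)^{-1/p} U(t)\bigr],
\]
so that the first bracket raised to the power $\p$ reproduces $F^{p}u$ (to be absorbed into the left-hand side), the second bracket raised to the power $p$ reproduces $f^{p}v$ (the target right-hand side), and the third bracket is an auxiliary weight that must be dominated by $A$.

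The main obstacle is precisely the control of this third factor: the Muckenhoupt condition is integral rather than pointwise, so one cannot bound $u^{-1/\p}v^{-1/p}U$ directly by $A$. The standard remedy (cf.~\cite[Theorem~4.17]{KMP}) is to insert an auxiliary weight $\psi(t)=V(t)^{-1/(p\p)}$ inside the first two brackets; the third bracket then becomes a quantity whose $\p$-th power, thanks to the identity $V'=-v^{1-\p}$, integrates by parts into a telescoping expression controlled pointwise by $A^{\p}$. Absorbing the $\bigl(\int F^{p}u\bigr)^{1/\p}$ factor on the left, raising to the power $1/p$, and collecting constants yields the sharp coefficient $p^{1/p}(\p)^{1/\p}A$ and completes the argument. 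An alternative route, which I would take if the direct H\"older manipulation becomes unwieldy, is the change of variables $\tau=V(t)$, which transforms the weighted inequality on $(0,\infty)$ into the classical unweighted one-dimensional Hardy inequality on $(0,V(0^+))$ (with constant $p'$) multiplied by a factor bounded by $A$.
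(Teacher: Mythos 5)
The paper offers no proof of this proposition at all: it is quoted as a known result with a pointer to Theorem 4.17 of \cite{KMP}, so there is no in-paper argument to measure yours against. Judged on its own terms, your sketch assembles the correct ingredients but stops exactly where the proof actually happens. The opening is fine: reducing to $f\ge 0$, the identity $F(s)^p=p\int_s^\infty F(t)^{p-1}f(t)\,\dt$, and Tonelli to obtain $\int_0^\infty F^pu\,\ds=p\int_0^\infty F^{p-1}fU\,\dt$; and you correctly diagnose that a naive two-factor H\"older fails because the hypothesis on $A$ is integral rather than pointwise. The repair, however, does not close as described. Your three-bracket splitting is a generalized H\"older with exponents $\p$, $p$, $\infty$, and once you insert $\psi=V^{-1/(p\p)}$ into the first bracket its $\p$-th power becomes $F^pu\psi^{\p}$ rather than $F^pu$, so the absorption into the left-hand side no longer goes through; likewise the second bracket produces $\int f^pv\psi^p$ rather than $\int f^pv$. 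The assertion that the third bracket then ``integrates by parts into a telescoping expression controlled pointwise by $A^{\p}$'' is precisely the computation the theorem consists of, and it is left undone. No choice of $\psi$ turns the third factor into a genuine $L^\infty$ bound implied by the Muckenhoupt condition.

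For the record, Muckenhoupt's own argument avoids absorption entirely. One applies H\"older \emph{inside} $F(s)=\int_s^\infty fv^{1/p}h\cdot v^{-1/p}h^{-1}\,\dt$ with $h=V^{1/(p\p)}$, uses $V'=-v^{1-\p}$ to evaluate $\int_s^\infty v^{1-\p}V^{-1/p}\,\dt=\p\,V(s)^{1/\p}$ exactly, then applies Tonelli and the two pointwise consequences of $U(t)^{1/p}V(t)^{1/\p}\le A$ together with the exact integration $\int_0^t uU^{-1/\p}\,\ds=p\,U(t)^{1/p}$; the factors $p$ and $(\p)^{p/\p}$ arise from these two exact integrations and yield the stated constant $p^{1/p}(\p)^{1/\p}A$ upon taking $p$-th roots. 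Your closing alternative (the substitution $\tau=V(t)$ reducing to the unweighted Hardy inequality) is a legitimate known route, but as written it too is only a pointer. Since the paper itself merely cites the result, a citation would have sufficed here; if a proof is wanted, the computation above must actually be carried out.
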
 
In this article we use the following version of strong maximal principle due to Kawohl, Lucia and Prashanth (Proposition 3.2 of \cite{Kawohl-Lucia-Prashanth}). 
\begin{proposition}[Strong Maximum Principle for $p$-Laplacian]\label{STM}
Let $\phi$ be a non negative function in $\wpb$ and $V \in L^1_{loc}(B^c_1)$ with $V \geq 0$ a.e. in $B^c_1$. Assume that $V\phi^{p-1} \in L^1_{loc}(B^c_1)$. Consider the inequality
$$  \disb  |\Gr \phi|^{p-2} \Gr \phi \cdot \Gr \upsilon + \disb V |\phi|^{p-2} \phi \upsilon \geq 0, \quad \forall v \in C_c^{\infty}(B^c_1), v \geq 0. $$
Then either $\phi \equiv 0$ or $\phi > 0$ a.e. in $B^c_1$.
\end{proposition}

\section{The embeddings of $\wpb$ }
For $N>p$, we prove the  continuous embeddings of $\wpb$ into $L^{p^*, p}(B^c_1)$, where $p^* = \frac{Np}{N-p}.$ For $g$ as in Theorem \ref{Exterior Ball}, we prove $\wpb$ is continuously and compactly embedded in $L^p(B^c_1,|g|)$.
\subsection{The embeddings into Lorentz spaces}
First we prove a lemma using the Muckenhoupt condition.

\begin{Lemma}\label{For embedding}
Let $N > p$. If $g \in L^{\frac{N}{p}, \infty}(\RN)$, then 
\begin{align}\label{for embedding}
\displaystyle \int_0^{\infty} g^*(s) {\phi^*(s)}^p \ds  \leq  p(p^*)^{p-1} \| g \|_{(\frac{N}{p}, \infty)}  \int_{0}^{\infty} s^{(p - \frac{p}{N})} | {\phi^*}^{\prime}(s) |^p \ds,\quad \forall \phi \in C_c^1(\RN).
\end{align}
\end{Lemma}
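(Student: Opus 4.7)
My plan is to apply the Muckenhoupt condition (Proposition \ref{Mucken}) directly to the one-dimensional quantity $f(t)=|{\phi^{*}}^{\prime}(t)|$ with weights chosen to match both sides of the target inequality. Since $\phi \in C_c^1(\RN)$, the rearrangement $\phi^*$ is locally absolutely continuous on $(0,\infty)$, non-increasing, and vanishes at infinity; consequently
\[
\phi^*(s)=\int_s^{\infty}|{\phi^{*}}^{\prime}(t)|\,\dt,\qquad s>0.
\]
I would then take $u(s)=g^*(s)$ and $v(s)=s^{p-p/N}$, so the left side of \eqref{Mucken1} becomes $\left(\int_0^\infty g^*(s){\phi^*(s)}^p\,\ds\right)^{1/p}$ and the right side contains exactly the weighted gradient integral appearing on the right of \eqref{for embedding}.

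The crux is to estimate the Muckenhoupt constant $A$. For the first factor I would use
\[
\int_0^t g^*(s)\,\ds \;=\; t\,g^{**}(t) \;\le\; t^{1-p/N}\,\|g\|_{(\frac{N}{p},\infty)},
\]
which is immediate from Proposition \ref{equivallence} and the definition of $\|\cdot\|_{(\frac{N}{p},\infty)}$ on $L^{N/p,\infty}(\RN)$. For the second factor I would compute the elementary integral
\[
\int_t^{\infty} s^{(p-p/N)(1-p')}\,\ds \;=\; \frac{N(p-1)}{N-p}\,t^{(p-N)/(N(p-1))},
\]
which converges precisely because $N>p$ (the hypothesis enters here and nowhere else). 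Multiplying these two bounds, the powers of $t$ cancel and I obtain
\[
A \;\le\; \|g\|_{(\frac{N}{p},\infty)}^{1/p}\left(\frac{N(p-1)}{N-p}\right)^{1/p'}.
\]

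Inserting this into \eqref{Mucken1} and raising both sides to the $p$-th power, the prefactor becomes
\[
p\,(p')^{p-1}\left(\frac{N(p-1)}{N-p}\right)^{p-1}\|g\|_{(\frac{N}{p},\infty)}.
\]
The identity $p'\cdot \frac{N(p-1)}{N-p}=\frac{Np}{N-p}=p^*$ then collapses the constant to the promised $p\,(p^*)^{p-1}\|g\|_{(\frac{N}{p},\infty)}$, yielding \eqref{for embedding}. I do not foresee a serious obstacle beyond careful bookkeeping of exponents; the only non-computational point is the absolute continuity of $\phi^*$ for $\phi\in C_c^1(\RN)$, which is standard. The conceptual content is that the weight $s^{p-p/N}$ is chosen precisely so that, after invoking the P\'olya--Szeg\"o inequality (Proposition \ref{HL and PS}), the right-hand side of \eqref{for embedding} is controlled by the $p$-Dirichlet energy of $\phi$, which is what the subsequent embedding of $\wpb$ into $L^{p^*,p}(B_1^c)$ will require.
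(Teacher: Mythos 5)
Your proposal is correct and is essentially identical to the paper's own proof: the same choices $f={\phi^*}'$, $u=g^*$, $v(s)=s^{p-\frac{p}{N}}$ in the Muckenhoupt condition, the same computation of the constant $A$ via $\int_0^t g^*=tg^{**}(t)$ and the elementary integral, and the same collapse of the constant through $p'\cdot\frac{N(p-1)}{N-p}=p^*$. Your explicit remark on the local absolute continuity of $\phi^*$ (needed to write $\phi^*(s)=\int_s^\infty|{\phi^*}'(t)|\,\dt$) is a point the paper leaves implicit, but it does not change the argument.
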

\begin{proof}
In Proposition \ref{Mucken},  set $f = {\phi^*}^{\prime}, u = g^*$ and $v(s)= s^{p - \frac{p}{N}}.$ Then $$\int_{0}^t u(s) \ds=\int_0^t g^*(s) \ds = tg^{**}(t)$$ and
\begin{align*}
\displaystyle \int_t^{\infty} {v(s)}^{1 - \p} \ds = \int_t^{\infty} s^{\frac{-\p}{\Np}}\ds = \frac{\Np}{\p - \Np} t^{\frac{\Np - \p}{\Np}} =  \frac{N(p-1)}{N-p} t^{\frac{p-N}{N(p-1)}}.
\end{align*}
Now,
\begin{align*}
 A &= \sup_{t>0} \left( \int_{0}^t u(s) \ds \right)^{\frac{1}{p}} \left( \int_{t}^{\infty} v(s)^{1 - \p} \ds \right)^{\frac{1}{\p}}\\ 
& = \left( \frac{N(p-1)}{N-p} \right)^{\frac{1}{\p}} \sup_{t>0} \{ t g^{**}(t)\}^{\frac{1}{p}} t^{\frac{p-N}{Np}}
= \left( \frac{N(p-1)}{N-p} \right)^{\frac{1}{\p}} \|g \|_{(\frac{N}{p}, \infty)}^{\frac{1}{p}}. 
\end{align*}
Therefore, by the Muckenhoupt condition  we have for all $\phi \in C_c^1(\RN),$
$$ \displaystyle \left(  \int_0^{\infty} g^*(s) {\phi^*(s)}^p \ds \right)^{\frac{1}{p}}  \leq p^{\frac{1}{p}}  \left( \frac{N(p-1)\p}{N-p} \right)^{\frac{1}{\p}} \| g \|^{\frac{1}{p}}_{(\frac{N}{p}, \infty)}  \left( \int_{0}^{\infty} s^{(p - \frac{p}{N})} | {\phi^*}^{\prime}(s)|^p \ds\right)^{\frac{1}{p}}.$$
Now \eqref{for embedding} follows by noting that  $p(p^*)^{p-1}$ is precisely the $p^ {\rm th}$ power of the constant in the right hand side of the above inequality.
\end{proof}
\begin{theorem}\label{embedding}
For $N > p,$ there exists $C>0$ such that \begin{equation}\label{Lorentz embedding for RN}
\| \phi \|_{(p^*,p)}^p \leq C \|\phi \|^p_{\wRN},\quad \forall\phi\in \wRN.
\end{equation}
\end{theorem}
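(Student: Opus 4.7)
\medskip

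The plan is to apply Lemma \ref{For embedding} to a carefully chosen weight $g$, combined with the P\'olya-Szeg\"o inequality, so that the left-hand side reads as (a power of) the Lorentz quasi-norm of $\phi$.

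First, I would identify the correct choice of weight. Observe that $\frac{1}{p^\ast}-\frac{1}{p}=-\frac{1}{N}$, so the quasi-norm satisfies
\begin{equation*}
|\phi|_{(p^\ast,p)}^{\,p}=\int_0^\infty s^{-p/N}\,\phi^\ast(s)^p\,\ds.
\end{equation*}
To recover exactly the factor $s^{-p/N}$ from $g^\ast$ in Lemma \ref{For embedding}, the natural candidate is $g(x)=|x|^{-p}$. A direct computation gives the distribution function $\alpha_g(s)=\omega_N s^{-N/p}$, hence $g^\ast(t)=\omega_N^{p/N}\,t^{-p/N}$, and consequently $g\in L^{N/p,\infty}(\R^N)$ with $|g|_{(N/p,\infty)}=\omega_N^{p/N}$.

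Next, I would first prove \eqref{Lorentz embedding for RN} for $\phi\in C_c^1(\R^N)$ and then extend by density, since $C_c^\infty(\R^N)$ is dense in $W^{1,p}(\R^N)$ and the decreasing rearrangement is continuous on $L^p$. Plugging the above $g$ into Lemma \ref{For embedding} yields
\begin{equation*}
\omega_N^{p/N}\int_0^\infty s^{-p/N}\,\phi^\ast(s)^p\,\ds
\;\leq\; p(p^\ast)^{p-1}\,\omega_N^{p/N}\int_0^\infty s^{\,p-p/N}\,|{\phi^\ast}'(s)|^p\,\ds.
\end{equation*}
Cancelling $\omega_N^{p/N}$ on both sides and applying the P\'olya-Szeg\"o inequality (Proposition \ref{HL and PS}(b)), which gives
\begin{equation*}
N^p\omega_N^{p/N}\int_0^\infty s^{\,p-p/N}\,|{\phi^\ast}'(s)|^p\,\ds\;\leq\;\int_{\R^N}|\Gr\phi|^p\,\dx,
\end{equation*}
leads to
\begin{equation*}
|\phi|_{(p^\ast,p)}^{\,p}\;\leq\;\frac{p(p^\ast)^{p-1}}{N^p\omega_N^{p/N}}\int_{\R^N}|\Gr\phi|^p\,\dx.
\end{equation*}

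Finally, since $\int_{\R^N}|\Gr\phi|^p\,\dx\leq\|\phi\|_{W^{1,p}(\R^N)}^p$ and the quasi-norm $|\cdot|_{(p^\ast,p)}$ is equivalent to the norm $\|\cdot\|_{(p^\ast,p)}$ by Proposition \ref{equivallence} (note $p^\ast>1$), the stated estimate \eqref{Lorentz embedding for RN} follows with an appropriate constant $C$. Since each of the three inputs (Lemma \ref{For embedding}, P\'olya-Szeg\"o, and the norm/quasi-norm equivalence) is already in place, I do not anticipate a serious obstacle; the only point requiring care is the density argument and the verification that $|x|^{-p}$ is indeed the right weight to make the exponents in the Muckenhoupt step align with the Lorentz quasi-norm exponent $\frac{1}{p^\ast}-\frac{1}{p}$.
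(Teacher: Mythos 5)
Your proof is correct and follows essentially the same route as the paper's: apply Lemma \ref{For embedding} with the weight $g(x)=|x|^{-p}$, combine with the P\'{o}lya-Szeg\"{o} inequality, invoke the norm/quasi-norm equivalence of Proposition \ref{equivallence}, and conclude by density (the paper merely reverses the order, first combining the lemma with P\'{o}lya-Szeg\"{o} for general $g$ and then specializing). The only slip is cosmetic: Lemma \ref{For embedding} is stated with the $g^{**}$-based norm, whose value for this weight is $\|g\|_{(\frac{N}{p},\infty)}=\tfrac{N}{N-p}\,\omega_N^{p/N}$ rather than the quasi-norm value $\omega_N^{p/N}$ you substituted, which only affects the final constant $C$.
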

\begin{proof}
Let $g\in L^{\frac{N}{p}, \infty}(\RN).$ Then by P\'{o}lya-Szeg\"{o} inequality (part (b) of Proposition \ref{HL and PS}) and by the above lemma we have
\begin{align}\label{lorentz inq1}
\displaystyle \int_0^{\infty} g^*(s) {\phi^*(s)}^p \ds & \le \tilde{C} \, \| g \|_{(\frac{N}{p}, \infty)} \int_{\RN} |\Gr \phi(x)|^p \dx, \quad \forall \phi \in C_c^1(\RN),
\end{align}
with $\tilde{C} = p(p^*)^{p-1}N^{-p} \om_N^{-\frac{p}{N}}$. As $g(x) = \frac{1}{|x|^p}$ is in $L^{\frac{N}{p}, \infty}(\RN)$ with $g^*(s) = \left( \frac{\om_N}{s} \right)^{\frac{p}{N}}$ and 
$\| g \|_{(\frac{N}{p}, \infty)} = \frac{N \om_N^{\frac{p}{N}}}{N - p},$ from \eqref{lorentz inq1} we have
\begin{align*}
 \int_0^{\infty} s^{- \frac{p}{N}} {\phi^*(s)}^p \ds & \leq  \tilde{C} \frac{N}{N-p} \; \int_{\RN} |\Gr \phi(x)|^p \dx \leq \tilde{C} \frac{N}{N-p} \| \phi \|_{\wRN}^p, \quad \forall\phi \in C_c^1(\RN).
\end{align*}
The integral in the left hand side of the inequality is equivalent to  $\| \phi \|_{(p^*,p)}$ (Proposition \ref{equivallence}) and hence by the density of  $C_c^1(\RN)$  in $\wRN$ we obtain  \eqref{Lorentz embedding for RN}  with $C= D \left( \frac{p^*}{N \om_N^{\frac{1}{N}}} \right)^{p},$ where $D$ is the equivalence constant. 
\end{proof}
\begin{remark}
For $g \in L^{\frac{N}{p}, \infty}(\RN)$, from \eqref{lorentz inq1} and Hardy-Littlewood inequality, we have the following generalized Hardy-Sobolev inequality: 
$$ \int_{\RN} g(x) |\phi(x)|^p \dx \leq \frac{p(p^*)^{p-1}}{N^p \om_{N}^{\frac{p}{N}}} \| g \|_{(\frac{N}{p}, \infty)} \int_{\RN} | \Gr \phi(x) |^p \dx, \quad \forall \phi \in \wRN.$$ In particular, by taking $g(x) = \frac{1}{|x|^p}$, we get the classical Hardy-Sobolev inequality $$ \int_{\RN} \frac{|\phi(x)|^p}{|x|^p} \dx \leq \left( \frac{p}{N-p} \right)^p \int_{\RN} | \Gr \phi(x) |^p \dx, \quad \forall \phi \in \wRN. $$
\end{remark}
\begin{corollary}\label{Lorentz embedding}
Let $N > p$. Then $ \Vert \phi \Vert_{(p^*,p)} \leq C \| \phi \|_{\wpb}. $
\end{corollary}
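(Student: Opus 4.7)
The plan is to reduce the statement to Theorem \ref{embedding} via a bounded Sobolev extension operator. Since $\pa B_1 = S^{N-1}$ is smooth, standard extension theory provides a bounded linear operator $E:\wpb \to \wRN$ with $E\phi=\phi$ a.e.\ on $B_1^c$ and $\|E\phi\|_{\wRN}\le C_1\,\|\phi\|_{\wpb}$, where $C_1>0$ is independent of $\phi$. If one wishes to avoid citing the abstract extension theorem, an explicit construction is available: pick a cutoff $\chi\in C_c^\infty(\R^N)$ with $\chi\equiv 1$ on $\{|x|\le 3/2\}$ and $\chi\equiv 0$ on $\{|x|\ge 2\}$, extend $(1-\chi)\phi$ by zero into $B_1$, and extend $\chi\phi$ across $\pa B_1$ via the spherical inversion $x\mapsto x/|x|^2$. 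Summing the two pieces produces $E\phi\in \wRN$ with the required norm bound, since the inversion is a smooth diffeomorphism on a neighbourhood of $\pa B_1$.

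Next I would relate the Lorentz quasi-norm of $\phi$ on $B_1^c$ to that of $E\phi$ on $\R^N$. Since $|\phi|=|E\phi|$ a.e.\ on $B_1^c$, for every $s>0$,
\[
\alpha_\phi(s)=\left|\{x\in B_1^c:|\phi(x)|>s\}\right| \le \left|\{x\in \R^N:|E\phi(x)|>s\}\right| = \alpha_{E\phi}(s),
\]
so $\phi^*(t)\le (E\phi)^*(t)$ for all $t>0$. Integrating in $t$ against $t^{\frac{1}{p^*}-\frac{1}{p}}$ gives $|\phi|_{(p^*,p)}\le |E\phi|_{(p^*,p)}$, and by Proposition \ref{equivallence} the same inequality persists, up to an absolute constant, for the equivalent norm $\|\cdot\|_{(p^*,p)}$.

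Combining the two steps and invoking Theorem \ref{embedding} on $E\phi$, I would conclude
\[
\|\phi\|_{(p^*,p)} \;\le\; C_2\,\|E\phi\|_{(p^*,p)} \;\le\; C_3\,\|E\phi\|_{\wRN} \;\le\; C_3 C_1\,\|\phi\|_{\wpb},
\]
which is the asserted bound with $C=C_3C_1$. The only mild obstacle is producing the extension with an operator bound independent of $\phi$; the reflection via $x\mapsto x/|x|^2$ handles this cleanly because its Jacobian and its inverse are uniformly bounded on the annulus $\{1\le|x|\le 2\}$ where the non-trivial part of $E\phi$ lives.
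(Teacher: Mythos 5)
Your proposal is correct and follows essentially the same route as the paper: invoke the Sobolev extension property of $B_1^c$ (the paper cites Theorem 9.7 of Brezis) to pass to $\wRN$ and then apply Theorem \ref{embedding}. The extra details you supply --- the explicit inversion-based extension and the monotonicity $\phi^*\le (E\phi)^*$ justifying $\|\phi\|_{(p^*,p)}\le C\|E\phi\|_{(p^*,p)}$ --- are correct and merely fill in steps the paper leaves implicit.
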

\begin{proof}
Since the boundary of $B^c_1$ is smooth, it has the extension property (Theorem 9.7 of \cite{Brezis}, page 272), i.e., there exists a positive constant $C$ such that $$ \|\phi \|_{\wRN} \leq C \| \phi \|_{\wpb}. $$ Now by Theorem \ref{embedding}, we get the required embedding.  
\end{proof}
\subsection{The embeddings into weighted Lebesgue spaces}\label{continuous embedding}
\begin{theorem}\label{weighted Hardy subcritical}
Let $N > p$. If $\g \in L^1((1, \infty);r^{p-1}),$ then there exits $C = C(N,p)>0$ such that  
\begin{align}\label{Hardy subcritical}
\disb \g(|x|)|\phi(x)|^p  \dx \leq C \Vert \g \Vert_{L^1((1, \infty) ; r^{p-1})} \| \phi \|_{\wpb}^p, \quad \forall \phi \in \wpb. 
\end{align}
\end{theorem}
\begin{proof}
Let $\psi \in \resb$. For $\om \in S^{N-1}$, set $\vph(t) = \psi(t \om)$ where $t \geq 1$. Using the fundamental theorem of calculus we have
\begin{align*}
\vph(r) = - \displaystyle \int_{r}^{\infty} \vph'(t) \dt = - \displaystyle \int_{r}^{\infty} \vph'(t) t^{\frac{1-N}{p}} t ^{\frac{N-1}{p}} \dt.
\end{align*}
By H\"{o}lder inequality,
\begin{align*}
| \vph(r)|^p \leq \left( \frac{p-1}{N-p} \right)^{p-1} r^{p-N} \left( \int_r^{\infty} t^{N-1} | \vph'(t) |^p \dt \right).
\end{align*}
 As $\vph'(t)= \Gr \psi(t \om) \cdot \om$, for each $\om \in S^{N-1}$ we have  $ |\vph'(t)| = |\Gr \psi(t \om)| $. Hence
\begin{align*}
| \psi(r\om)|^p \leq \left( \frac{p-1}{N-p} \right)^{p-1} r^{p-N} \left( \int_r^{\infty} t^{N-1} | \Gr \psi(t \om)|^p \dt \right).
\end{align*}
Set $C = \left( \frac{p-1}{N-p} \right)^{p-1}$. We multiply both sides by $r^{N-1} \g(r)$ and integrate over $S^{N-1} \times (1, \infty)$ to get
\begin{align*}
  \displaystyle  \int_{1}^{\infty} \int_{S^{N-1}} &|\psi(r\om)|^p r^{N-1} \g(r) \dS\, \dr 
 \leq C \int_{1}^{\infty} r^{p-1} \g(r) \dr \left(  \int_1^{\infty}  \int_{S^{N-1}}  r^{N-1} | \Gr \psi(r \om)|^p  \dS \; \dr \right).
\end{align*}
Thus we obtain
\begin{align*}
\disb \g(|x|) |\psi(x)|^p \dx  \leq C \left( \int_1^{\infty} r^{p-1} \g(r) \dr \right) \| \psi \|^p_{\wpb}, \quad \forall \psi \in \resb.
\end{align*}
Now \eqref{Hardy subcritical} follows by the density of $\resb$ in $\wpb$.
\end{proof}

\begin{theorem}\label{weighted Hardy critical}
Let $N = p$. If $\g \in L^1((1, \infty); {(r(1 + \log r))}^{N-1})$, then there exits $C = C(N) >0$ such that 
\begin{align*}
\no \disb  \g(|x|)\,|\phi(x)|^N  \dx \leq C & \| \g \|_{ L^1((1, \infty); {(r(1 + \log r))}^{N-1})} 
\| \phi \|_{\wNb}^N,\quad \forall\, \phi \in \wNb.
\end{align*}
\end{theorem}
\begin{proof} Let $\psi \in \resb$. As before, set $\vph(t) = \psi(t \om)$ where $t \geq 1$. Then
\begin{align*}
\vph(r) - \vph(1) = \int^r_1 \vph'(t) \dt  = \int^r_1 t^{\frac{1-N}{N}} t ^{\frac{N-1}{N}} \vph'(t) \dt.
\end{align*}
By  H\"{o}lder inequality,
\begin{align*}
\abs{\vph(r) - \vph(1)} \leq \left( \int_1^r \frac{1}{t} \dt \right)^{\frac{1}{\Np}} \left( \int_1^r t^{N-1} |\vph'(t)|^N \dt \right)^{\frac{1}{N}},
\end{align*}
where $\Np$ is the H\"older conjugate of $N.$ Thus
\begin{align*}
|\vph(r)|^N \leq 2^{N-1} \left\{ |\vph(1)|^N + (\log r)^{N-1} \int_1^r t^{N-1} |\vph'(t)|^N \dt \right\}
\end{align*}
and hence for $\om \in S^{N-1}$, we have
\begin{align*}
|\psi(r \om)|^N \leq 2^{N-1} \left\{ |\psi(\om)|^N + (\log r)^{N-1} \int_1^r t^{N-1} |\Gr \psi(t\om)|^N \dt \right\}.
\end{align*}
Now multiply both sides by $r^{N-1} \g(r)$ and integrate over $S^{N-1} \times (1, \infty)$ to get
\begin{align}\label{ce 1}
\no  \frac{1}{2^{N-1}} \displaystyle \int_{1}^{\infty} & \int_{S^{N-1}} |\psi(r \om)|^N r^{N-1}  \g(r) \dS \; \dr \leq  \int_1^{\infty} \int_{S^{N-1}}  | \psi(\om)|^N r^{N-1} \g(r) \dS \; \dr \\
& + \left( \int_{1}^{\infty} (r \log r)^{N-1} \g(r) \dr \right) \left( \int_1^{\infty} \int_{S^{N-1}}  r^{N-1} |\Gr \psi(r\om)|^N   \dS \; \dr \right).
\end{align}
Using  the trace embedding of $\wNb$ into $L^N(\pa B^c_1)$ (Theorem 2.86 of \cite{Demengel}, page  100) we estimate the following integral:
\begin{align}\label{critical 2}
\no \int_1^{\infty} \int_{S^{N-1}} | \psi(\om)|^N r^{N-1} \g(r) & \dS  \; \dr  = \left( \int_1^{\infty} r^{N-1} \g(r) \dr \right) \left( \int_{S^{N-1}} |\psi(\om)|^N \dS\right) \\
 & \leq C_1 \left( \int_1^{\infty} r^{N-1} \g(r) \dr \right)\disb \left(|\psi(x)|^N + | \Gr \psi(x)|^N \right) \dx,
\end{align}
where $C_1=C_1(N)>0$ is the embedding constant. 
By combining the above inequalities and using the density argument we obtain
\begin{align*}
\disb \g(|x|) |\psi(x)|^N  \dx  \leq C  \left( \int_1^{\infty} (r + r \log r)^{N-1} \g(r) \dr \right) \| \psi \|^N_{\wNb}, \quad \forall \psi \in \wNb.
\end{align*}
\end{proof}
\begin{theorem}\label{weighted Hardy supercritical}
Let $N < p$. If $\g \in L^1((1, \infty);r^{p-1}),$ then there exits $C = C(N,p)>0$ such that  
\begin{align}\label{Hardy supercritical}
\disb \g(|x|)|\phi(x)|^p  \dx \leq C \Vert \g \Vert_{L^1((1, \infty) ; r^{p-1})} \|\phi \|^p_{\wpb}, \quad \forall \phi \in \wpb. 
\end{align}
\end{theorem}
\begin{proof}
For $\vph$ as in the above proof we have
\begin{align*}
\vph(r) - \vph(1)  = \int^r_1 \vph'(t) \dt = \int^r_1 t^{\frac{1-N}{p}} t ^{\frac{N-1}{p}} \vph'(t) \dt.
\end{align*}
By H\"{o}lder inequality we get
\begin{align*}
\abs{\vph(r) - \vph(1)} \leq  \left( \int_1^r t^{\frac{1-N}{p-1}}  \dt \right)^{\frac{1}{\p}} \left( \int_1^r t^{N-1} | \vph'(t)|^p \dt \right)^{\frac{1}{p}}, 
\end{align*}
where $\p$ is the H\"older conjugate of $p.$ Thus
\begin{align}\label{super 1}
|\vph(r)|^p \leq 2^{p-1} \left\{ |\vph(1)|^p + \left( \frac{p-1}{p-N} \right)^{p-1} r^{p-N}  \left( \int_1^r t^{N-1} | \vph'(t)|^p \dt \right) \right\}. 
\end{align}
As before, we multiply both sides of \eqref{super 1} by $r^{N-1} \g(r)$ and integrate over $S^{N-1} \times (1, \infty)$ to get
\begin{align*}
\no  \frac{1}{2^{p-1}} \displaystyle \int_{1}^{\infty} \int_{S^{N-1}} & |\psi(r\om)|^p  r^{N-1} \g(r) \dS \; \dr \leq \int_1^{\infty} \int_{S^{N-1}} | \psi(\om)|^p r^{N-1} \g(r) \dS \; \dr \\
& + \left( \frac{p-1}{p-N} \right)^{p-1} \left( \int_1^{\infty} r^{p-1} \g(r) \dr \right)  \left( \int_1^{\infty} \int_{S^{N-1}} r^{N-1} | \Gr \psi(r\om) |^p   \dS \; \dr \right).
\end{align*}
The rest of the proof follows as in the proof of Theorem \ref{weighted Hardy critical}.
\end{proof}
Next, we prove the embeddings given above  are indeed compact. 

\begin{theorem}\label{compact embedding sub sup}
Let $\g\in X$. Then $\wpb$ embedded compactly into $L^p(B_1^c,|g|).$
\end{theorem}

\begin{proof}
Let $\phi_n \rightharpoonup \phi$ in $\wpb.$ 
Set $ M = \sup \{ \| \phi_n - \phi \|_{\wpb} \}.$ Let $\ep > 0$ be arbitrary. By density of $C_c^{\infty}((1, \infty))$ in $X$,  there exits $\g_{\ep} \in C_c^{\infty}((1, \infty))$ such that 
$
\| \g - \g_{\ep} \|_{X} < \frac{\ep}{M^p}.
$
Now,
\begin{align}\label{compact4}
\disb |g| |\phi_n - \phi|^p \leq \disb \g |\phi_n - \phi|^p  \le   \disb  |\g - \g_{\ep}| |\phi_n - \phi|^p+\disb |\g_{\ep}| |\phi_n - \phi|^p .
\end{align}
 From Theorem \ref{weighted Hardy subcritical}-\ref{weighted Hardy supercritical},  we have  
\begin{align}\label{compact5}
\disb |\g - \g_{\ep}| |\phi_n - \phi|^p  & \leq  C \| \g - \g_{\ep} \|_{X} \| \phi_n - \phi \|_{\wpb}^p,
\end{align} 
where $C > 0$ is the embedding constant. By the compactness of the embedding of $\wpb$ into $L^p_{loc}(B^c_1)$, there exits $n_0 \in \N$ such that
$\int_{B^c_1} |\g_{\ep}| |\phi_n - \phi|^p  < \ep, \forall\, n > n_0.$
Now by the above inequalities we obtain $$\disb \g |\phi_n - \phi|^p < C\ep, \quad \forall\, n\ge n_0.$$ Thus $\phi_n$ converges strongly in $L^p(B_1^c,|g|)$ as required.
\end{proof}

\section{The variational settings}
Now we develop the functional settings for proving our main theorems. For $g$ as in Theorem \ref{Lorentz space} or Theorem \ref{Exterior Ball}, we consider the following functionals on $\wpb$:
$$J(\phi)  = \disb |\Gr \phi|^p;\quad G(\phi)=\disb g |\phi|^p.$$
One can easily verify that $J, G\in C^1(\wpb;\R)$ and for $ \phi,u \in \wpb$,
$$
\left< J'(\phi), u \right> = p\disb |\Gr \phi|^{p-2}\Gr \phi\cdot \Gr u;  \quad\quad  \left< G'(\phi), u \right> = p\disb g| \phi|^{p-2} \phi u,
$$
where $\left<\, ,\, \right>$ denotes the duality action.
\begin{definition}
 We say a function $g$ belongs to the class $\A$, if 
  $g\in L^1_{loc}(B_1^c)$, supp$(g^+)$ has a positive measure and  \begin{enumerate}[(i)]
                                \item $g\in \F_{\frac{N}{p}}$ with $N>p,$ or 
                                \item $\g \in \left\{\begin{array}{ll}
                    L^1((1, \infty); r^{p-1}), \, & N \neq p,\\
                     L^1((1, \infty); {(r(1 + \log r))}^{N-1}), \, &N=p.
                   \end{array}\right.$
                               \end{enumerate}
\end{definition}
\begin{proposition}\label{Compact}If $g\in \A$, then $G$ and $G'$ are compact on $\wpb$.  
\end{proposition}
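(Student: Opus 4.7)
The plan is to deduce both claims from a compact embedding of $\wpb$ into the weighted space $L^p(B_1^c,|g|)$. For $g$ as in~(ii) this embedding is precisely Proposition~\ref{compact embedding sub sup}. For $g\in\wsp$ in case~(i) I would run the same approximation scheme: by the density of $\cc(B_1^c)$ in $\wsp$, pick $g_\ep\in\cc(B_1^c)$ with $\|g-g_\ep\|_{(N/p,\infty)}$ small; the tail
$$\disb |g-g_\ep|\,|\phi_n-\phi|^p\;\le\;C\,\|g-g_\ep\|_{(N/p,\infty)}\,\|\phi_n-\phi\|_{(p^*,p)}^p$$
is controlled by H\"older's inequality in Lorentz spaces (using $\||\phi|^p\|_{(p^*/p,1)}=\|\phi\|_{(p^*,p)}^p$) together with Corollary~\ref{Lorentz embedding}, while the remaining term, whose integrand is supported on the compact set $\mathrm{supp}(g_\ep)$, is handled by the classical Rellich-Kondrachov embedding $\wpb\hookrightarrow L^p_{loc}(B_1^c)$.

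Given this compact embedding, fix $\phi_n\rightharpoonup\phi$ in $\wpb$, so that $\phi_n\to\phi$ strongly in $L^p(B_1^c,|g|)$. Compactness of $G$ follows at once from the pointwise bound $\bigl||a|^p-|b|^p\bigr|\le p(|a|^{p-1}+|b|^{p-1})|a-b|$ combined with H\"older applied to the splitting $|g|=|g|^{(p-1)/p}\cdot|g|^{1/p}$, yielding
$$|G(\phi_n)-G(\phi)|\;\le\;C\bigl(\|\phi_n\|_{L^p(|g|)}^{p-1}+\|\phi\|_{L^p(|g|)}^{p-1}\bigr)\|\phi_n-\phi\|_{L^p(|g|)}\longrightarrow 0.$$

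For $G'$, I would estimate the dual norm
$$\|G'(\phi_n)-G'(\phi)\|_{(\wpb)^*}\;=\;p\sup_{\|u\|_{\wpb}\le 1}\Bigl|\disb g\bigl(|\phi_n|^{p-2}\phi_n-|\phi|^{p-2}\phi\bigr)u\Bigr|.$$
For $p\ge 2$, combine the pointwise inequality $\bigl||a|^{p-2}a-|b|^{p-2}b\bigr|\le C_p(|a|+|b|)^{p-2}|a-b|$ with the triple H\"older corresponding to the splitting $|g|=|g|^{(p-2)/p}|g|^{1/p}|g|^{1/p}$; for $1<p<2$ use instead $\bigl||a|^{p-2}a-|b|^{p-2}b\bigr|\le C_p|a-b|^{p-1}$ together with a double H\"older. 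In either case the resulting bound factors as a product of (a)~a term uniformly bounded through the continuous embedding of $\wpb$ into $L^p(|g|)$, (b)~a factor $\|u\|_{L^p(|g|)}\le C\|u\|_{\wpb}\le C$, and (c)~a power of $\|\phi_n-\phi\|_{L^p(|g|)}$ that tends to $0$. Taking the supremum over $u$ then gives $G'(\phi_n)\to G'(\phi)$ in the dual norm.

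The main technical step is the verification of the compact embedding in case~(i), since Proposition~\ref{compact embedding sub sup} covers only case~(ii); the Lorentz H\"older inequality paired with Corollary~\ref{Lorentz embedding} is the right substitute for the pointwise $L^1$ estimate used there. The bifurcation at $p=2$ in the elementary inequality for $|t|^{p-2}t$ is only a notational nuisance and does not affect the structure of the argument.
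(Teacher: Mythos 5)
Your proposal is correct, and its skeleton matches the paper's: everything is reduced to a compact embedding of $\wpb$ into $L^p(B_1^c,|g|)$, obtained by approximating $g$ with compactly supported functions, controlling the tail by a weighted (resp.\ Lorentz) Hardy--Sobolev estimate, and handling the local part by Rellich--Kondrachov. Your Lorentz--H\"older tail bound $\int_{B_1^c}|g-g_\ep|\,|\phi_n-\phi|^p\le C\|g-g_\ep\|_{(\frac{N}{p},\infty)}\|\phi_n-\phi\|_{(p^*,p)}^p$ is precisely the ingredient the paper leaves implicit for case (i) when it invokes ``the density of $C_c^{\infty}(B_1^c)$ in $\F_{\frac{N}{p}}$ and the arguments as in the proof of Proposition \ref{compact embedding sub sup}''. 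Where you genuinely diverge is in the treatment of $G'$: the paper applies H\"older with the splitting $|g|=|g|^{1/\p}\cdot|g|^{1/p}$ to bound the dual norm by $\|K(\phi_n)-K(\phi)\|_{\p}$ for the superposition operator $K(\phi)=|g|^{1/\p}|\phi|^{p-2}\phi$, and then asserts that $K:\wpb\to L^{\p}(B_1^c)$ is compact ``by a similar set of arguments'' --- a step that requires rerunning the whole approximation scheme for the nonlinear quantity $\int|g|\,\bigl||\phi_n|^{p-2}\phi_n-|\phi|^{p-2}\phi\bigr|^{\p}$ and is not spelled out. You instead deduce the dual-norm convergence directly from the strong convergence $\phi_n\to\phi$ in $L^p(B_1^c,|g|)$ via the elementary pointwise inequalities for $t\mapsto|t|^{p-2}t$ and a two- or three-fold H\"older splitting of $|g|$, with the uniform factors controlled by the continuous embedding. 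This buys a more self-contained argument --- a single compactness statement is proved, and the rest is pointwise algebra --- at the modest cost of the case distinction at $p=2$, which, as you note, is purely notational. Both routes are sound; yours fills in a step the paper only sketches.
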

\begin{proof}
\noi{\it Compactness of $G$:} If $g \in \F_{\frac{N}{p}}$, then $G$ is compact using the density of $C_c^{\infty}(B^c_1)$ in $\F_{\frac{N}{p}}$ and the arguments as in the proof of Theorem \ref{compact embedding sub sup}. If $\g \in X,$ then the compactness of $G$ follows from Theorem \ref{compact embedding sub sup}. \\
\noi{\it Compactness of $G'$:} Let $g\in \F_{\frac{N}{p}}$ and let $\phi_{n} \rightharpoonup \phi$ in $\wpb$.  For $v \in \wpb,$
\begin{align*}
\no  | \left< {G}^{\prime}(\phi_{n}) - {G}^{\prime}(\phi)), v \right> | & \leq \disb |g| |( | \phi_n|^{p-2} \phi_{n} - |\phi|^{p-2} \phi ) | |v| \\
& \leq   \left( \disb |g| | ( | \phi_n|^{p-2} \phi_{n} - |\phi|^{p-2} \phi )|^{\p} \right)^{ \frac{1}{\p}} \left( \disb |g| |v|^p \right)^{\frac{1}{p}} \\
& \leq C \left( \disb |g| | ( | \phi_n|^{p-2} \phi_{n} - |\phi|^{p-2} \phi )|^{\p} \right)^{ \frac{1}{\p}} \|g \|^{\frac{1}{p}}_{(\frac{N}{p}, \infty)} \| v \|_{\wpb},
\end{align*}
where $C > 0$ is the embedding constant. Therefore, $$ \| {G}^{\prime}(\phi_{n}) - {G}^{\prime}(\phi) \| \leq C \left( \disb |g| | ( | \phi_n|^{p-2} \phi_{n} - |\phi|^{p-2} \phi )|^{\p} \right)^{ \frac{1}{\p}} \|g \|^{\frac{1}{p}}_{(\frac{N}{p}, \infty)}. $$ Now consider the map  $K$ defined on $\wpb$ as $K(\phi)=|g|^{\frac{1}{\p}}|\phi|^{p-2}\phi.$ Clearly $K$ maps $\wpb$ into $L^{\p}(B^c_1)$ and  using a similar set of arguments as in the proof of Theorem \ref{compact embedding sub sup}, one can prove that  $K$ is compact. Hence we conclude $\| {G}^{\prime}(\phi_{n}) - {G}^{\prime}(\phi) \| \rightarrow 0,$ as $n\ra \infty.$ For $\g \in X$, the proof is similar.
 \end{proof}
For $g$ as before, consider the set  $$N_g:=  \{ \phi \in \wpb : \int_{B^c_1} g |\phi|^p = 1 \}= G^{-1}(1).$$ 
Since $g^+ \not \equiv 0,$ one can show that the set $N_g$ is nonempty (Proposition 4.2 of \cite{Kawohl-Lucia-Prashanth}). The functional $J$ is not coercive on $\wpb.$ We prove a Poincaré type inequality for functions in $N_g$ that will ensure $\int_{B_1^c} |\Gr \phi|^p$ is coercive on $N_g$. 
\begin{Lemma}\label{Poincare}
Let $g\in \A.$ Then there exits $m > 0$ such that
\begin{align}\label{Poincare inequality}
\disb |\Gr \phi|^p \geq m \disb |\phi|^p, \quad \forall \phi \in N_g.
\end{align}
\end{Lemma}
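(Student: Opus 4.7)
The plan is a proof by contradiction, leveraging the compactness of $G$ from Proposition \ref{Compact} together with the fact that $B_1^c$ has infinite Lebesgue measure. Suppose the inequality fails for every $\varepsilon>0$: then one extracts a sequence $\phi_n\in N_g$ with $\disb |\Gr\phi_n|^p<\frac{1}{n}\disb |\phi_n|^p$. Setting $\psi_n:=\phi_n/\|\phi_n\|_{L^p(B_1^c)}$, we obtain $\|\psi_n\|_{L^p(B_1^c)}=1$ and $\disb|\Gr\psi_n|^p<1/n\to 0$. In particular $\{\psi_n\}$ is bounded in $\wpb$, so a subsequence satisfies $\psi_n\wra\psi$ weakly in $\wpb$.

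Weak lower semicontinuity of the gradient seminorm forces $\Gr\psi\equiv 0$ a.e., so $\psi$ is constant on the connected set $B_1^c$. Since $\psi\in L^p(B_1^c)$ and $|B_1^c|=\infty$, that constant must vanish, so $\psi\equiv 0$. Compactness of $G$ (Proposition \ref{Compact}) then gives $G(\psi_n)\to G(\psi)=0$, while the $p$-homogeneity of $G$ yields $G(\psi_n)=\|\phi_n\|_{L^p}^{-p}G(\phi_n)=\|\phi_n\|_{L^p}^{-p}$; hence $\|\phi_n\|_{L^p(B_1^c)}\to\infty$.

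The final step is to convert this blow-up into a genuine contradiction. I plan to invoke the compact embedding $\wpb \hookrightarrow L^p(B_1^c,|g|)$ from Proposition \ref{compact embedding sub sup}, which forces $\disb|g||\psi_n|^p\to 0$ strongly. Combining with $1=G(\phi_n)\le \|\phi_n\|_{L^p}^p\disb|g||\psi_n|^p$ and tracking the hypothesized rate $\disb|\Gr\phi_n|^p<n^{-1}\disb|\phi_n|^p$ should produce the contradiction, with the two cases of the class $\A$ (membership in $\F_{N/p}$ for $N>p$, or $\tilde g\in X$) handled through their respective embeddings already established in Section~3.

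I expect the main obstacle to be precisely this final balancing: $\psi_n\wra 0$ in $\wpb$ together with $\|\psi_n\|_{L^p}=1$ means the $L^p$-mass of $\psi_n$ escapes to infinity on the unbounded domain $B_1^c$, and turning the compactness of $G$ into a definitive contradiction requires the full strength of the approximation property built into the class $\A$ (closure of $C_c^\infty$ in the Lorentz norm when $N>p$, or finiteness of the weighted integral when $\tilde g\in X$), which pins the effective support of $g$ in a compact region where $\psi_n \to 0$ strongly by Rellich--Kondrachov.
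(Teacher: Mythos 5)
Your proposal follows the same broad strategy as the paper (contradiction, weak limit is a constant, hence zero by the infinite measure of $B_1^c$, then compactness of $G$), but your more careful normalization exposes a step that cannot be closed. The gap is exactly where you flagged it: after setting $\psi_n=\phi_n/\|\phi_n\|_{L^p}$ you correctly obtain $\psi_n\rightharpoonup 0$, $G(\psi_n)=\|\phi_n\|_{L^p}^{-p}\to 0$ and $\|\phi_n\|_{L^p}\to\infty$, and your proposed finish --- combining $1=G(\phi_n)\le\|\phi_n\|_{L^p}^p\int_{B_1^c}|g|\,|\psi_n|^p$ with $\int_{B_1^c}|g|\,|\psi_n|^p\to 0$ --- is a product of a quantity tending to infinity with one tending to zero, which is inconclusive. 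This gap is not fixable, because the inequality \eqref{Poincare inequality} is false as stated. Take $g\in C_c^\infty(B_1^c)$ nonnegative, $g\not\equiv 0$, supported in $\{1<|x|<2\}$ (such $g$ lies in $\A$ under either clause); fix $u\in C_c^\infty(\{1<|x|<2\})$ with $\int g|u|^p>0$ and $w\in C_c^\infty(\{|y|<1\})$, and set $v_n(x)=n^{-N/p}w((x-x_n)/n)$ with $|x_n|=3n$, so that $\mathrm{supp}\,v_n\subset B_1^c$ is disjoint from $\mathrm{supp}\,g\cup\mathrm{supp}\,u$, $\|v_n\|_p=\|w\|_p$ and $\|\Gr v_n\|_p=n^{-1}\|\Gr w\|_p$. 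Put $\psi_n=n^{-1}u+\|w\|_p^{-1}v_n$ and $\phi_n=\psi_n/G(\psi_n)^{1/p}$. Then $\phi_n\in N_g$, $\int_{B_1^c}|\Gr\phi_n|^p$ equals a constant independent of $n$, while $\int_{B_1^c}|\phi_n|^p\sim n^p\to\infty$; so no $\ep>0$ can work. The mechanism is the one you anticipated: $L^p$-mass escapes to infinity through the region where $g$ vanishes, and membership of $g$ in $\A$ does not pin that mass down, since $N_g$ normalizes only $\int g|\phi|^p$ and not $\|\phi\|_p$.

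For comparison, the paper's own proof negates \eqref{Poincare inequality} and asserts a sequence $\phi_n\in N_g$ satisfying \emph{simultaneously} $\int_{B_1^c}|\phi_n|^p=1$ and $\int_{B_1^c}|\Gr\phi_n|^p\le 1/n$. But the constraint $G(\phi_n)=1$ already fixes the scale of $\phi_n$, so the additional normalization $\|\phi_n\|_p=1$ cannot be imposed; it is precisely this double normalization that lets the paper conclude $G(\phi)=1$ for the weak limit and contradict $\phi=0$. With the legitimate normalization (yours), compactness of $G$ only gives $G(\psi_n)\to 0$, and the contradiction evaporates --- consistently with the counterexample above. So your attempt is the honest version of the paper's argument, and its failure to close is a genuine defect of the statement rather than of your approach; a correct development would need either an added hypothesis controlling $\|\phi\|_p$ on $N_g$ (as the sign condition $\int_\Om g<0$ does on bounded domains) or a different coercivity mechanism.
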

\begin{proof}
The proof is by contradiction. If \eqref{Poincare inequality} is not true, then there exits a sequence $( \phi_{n} )$ in $N_g $ such that
\begin{align}\label{Poincare 1}
\disb | \phi_n|^{p} = 1, \disb |\Gr \phi_n|^p \leq \frac{1}{n}.
\end{align}
Thus $( \phi_{n} ) $ is bounded in $\wpb $ and hence by the reflexivity we get a subsequence $(\phi_{n_k})$ of $(\phi_n)$ such that as $k \rightarrow \infty$, $\phi_{n_k} \rightharpoonup \phi $ in $\wpb$. Thus $\phi\in N_g,$ as $G$ is compact. Further, from \eqref{Poincare 1}, by weak lowersemicontinuity of $\|.\|_{p}$ and $J$,  we have: $$ \disb | \phi|^{p} \le  1 \text{ and } \int_{B^c_1} | \Gr \phi |^p = 0. $$ Now the connectedness yields $\phi = 0$, a contradiction as $\phi \in N_g$.
\end{proof}

\begin{remark}\label{manifold}
 For $\phi\in N_g,$ $\left< G'(\phi), \phi\right>=p.$ Thus 1  is a regular point of $G$ and hence $N_g$ gets a $C^1$ manifold structure.
For $\phi \in N_g,$ the tangent space at $\phi$ is given by (Proposition 4.3.33 and Remark 4.3.40 of \cite{Drabek-Milota})  $$ T_{\phi} N_g = \mbox{Ker}( G^{\prime}(\phi)).$$ 
Further, 
\begin{equation}\label{derivative}
  \Vert \dJ(\phi) \Vert = \sup_{\substack{v \in \text{Ker}( G^{\prime}(\phi)) \\ \| v \| = 1}} \left< J^{\prime}(\phi),v \right> =  \min_{\la \in \R} \Vert J^{\prime}(\phi) - \la {G}^{\prime}(\phi) \Vert
\end{equation}
(Proposition 6.4.35 of \cite{Drabek-Milota}). In particular, if $\phi$ is a critical point of $J$ on $N_g$, then $\phi$ is an eigenfunction of \eqref{Eqn:Neumann} corresponding to the eigenvalue $J(\phi).$
\end{remark}

\begin{definition}
 We say a map $J \in C^1(Y, \R)$ satisfies Palais-Smale (P. S.) condition on a $C^1$ manifold $ M  \subset Y$, if $(\phi_n)$ in $M$ such that $J(\phi_n) \rightarrow c \in \R$ and $ \Vert \dJ(\phi_n) \Vert \rightarrow 0,$ then $(\phi_n)$ has a subsequence that converges in $M$.
\end{definition}
\begin{Lemma}\label{P.S.} Let $g\in \A.$ Then $J$ satisfies the P. S. condition on $N_g$.
\end{Lemma}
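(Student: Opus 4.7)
My plan is to follow the standard Palais--Smale verification for constrained nonlinear eigenvalue problems involving the $p$-Laplacian, adapted to the exterior-domain setting. Given $(\phi_n) \subset N_g$ with $J(\phi_n) \to c$ and $\|\dJ(\phi_n)\| \to 0$, the formula in Remark \ref{manifold} produces Lagrange multipliers $\lambda_n \in \R$ with $T_n := J'(\phi_n) - \lambda_n G'(\phi_n) \to 0$ in $(\wpb)^*$. The goal is to extract a subsequence converging strongly in $\wpb$ to some $\phi \in N_g$.

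First I would establish boundedness and identify the weak limit. Since $\phi_n \in N_g$, Lemma \ref{Poincare} gives $\|\phi_n\|_p^p \le \epsilon^{-1} J(\phi_n)$, so $(\phi_n)$ is bounded in $\wpb$. After passing to a subsequence, $\phi_n \rightharpoonup \phi$ in $\wpb$, and compactness of $G$ (Proposition \ref{Compact}) yields $G(\phi) = \lim G(\phi_n) = 1$, so $\phi \in N_g$. Testing $T_n$ against $\phi_n$ and using the $p$-homogeneity $\langle J'(\phi_n), \phi_n\rangle = p J(\phi_n)$ and $\langle G'(\phi_n), \phi_n \rangle = p$, I get $p(J(\phi_n) - \lambda_n) = \langle T_n, \phi_n\rangle \to 0$, so $\lambda_n \to c$.

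Next, I would upgrade weak convergence to strong convergence of the gradients. Testing $T_n$ against $\phi_n - \phi$: the quantity $\|T_n\|_{(\wpb)^*} \|\phi_n - \phi\|_{\wpb}$ tends to zero, while compactness of $G'$ (Proposition \ref{Compact}) together with $\phi_n - \phi \rightharpoonup 0$ in $\wpb$ forces $\langle G'(\phi_n), \phi_n - \phi\rangle \to 0$. Hence $\disb |\Gr \phi_n|^{p-2} \Gr \phi_n \cdot \Gr (\phi_n - \phi) \to 0$. Combined with the weak-convergence identity $\disb |\Gr \phi|^{p-2} \Gr \phi \cdot \Gr (\phi_n - \phi) \to 0$ and the standard Simon--Lindqvist inequalities for the $p$-vector pairing, one obtains $\Gr \phi_n \to \Gr \phi$ in $L^p(B_1^c)$; in particular $J(\phi_n) \to J(\phi) = c$.

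The main obstacle, and the step I would spend the most effort on, is promoting the gradient convergence to convergence in the full $\wpb$-norm, i.e.\ showing $\phi_n \to \phi$ in $L^p(B_1^c)$. On the unbounded domain $B_1^c$ this does not follow automatically from weak convergence plus gradient convergence: a test profile such as $\psi_k(x) = k^{-N/p}\eta(x/k)$ weakly converges to $0$ in $\wpb$ with vanishing gradients but non-vanishing $L^p$ norm. To rule this out here I would exploit the constraint $\phi_n \in N_g$ together with three ingredients: Rellich--Kondrachov on each bounded annulus $B_R \setminus \overline{B_1}$, providing (after a diagonal extraction) $\phi_n \to \phi$ in $L^p_{\rm loc}$ and almost everywhere; the compact embedding $\wpb \hookrightarrow L^p(B_1^c,|g|)$ of Proposition \ref{compact embedding sub sup} applied to $\phi_n - \phi \rightharpoonup 0$, which gives $\disb |g| |\phi_n - \phi|^p \to 0$; and a Brezis--Lieb decomposition tied to the constraint identity $G(\phi_n) = G(\phi) = 1$, which combined with the above pins down $\|\phi_n - \phi\|_p \to 0$. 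Together with $\|\Gr (\phi_n - \phi)\|_p \to 0$, this yields $\phi_n \to \phi$ in $\wpb$ and verifies the P. S. condition.
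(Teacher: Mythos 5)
Your first two steps follow the paper's proof in substance. Boundedness via Lemma \ref{Poincare}, extraction of the multipliers $\la_n$ from \eqref{derivative}, the identity $\inpr{J'(\phi_n)-\la_n G'(\phi_n),\phi_n}=p(J(\phi_n)-\la_n)$ giving $\la_n\to c$, and the use of the compactness of $G$ and $G'$ (Proposition \ref{Compact}) to show $\inpr{J'(\phi_n)-J'(\phi),\phi_n-\phi}\to 0$ are all exactly what the paper does. For the passage to strong gradient convergence the paper uses the elementary estimate $\frac1p\inpr{J'(\phi_n)-J'(\phi),\phi_n-\phi}\ge\left(\Vert\Gr\phi_n\Vert_p^{p-1}-\Vert\Gr\phi\Vert_p^{p-1}\right)\left(\Vert\Gr\phi_n\Vert_p-\Vert\Gr\phi\Vert_p\right)$ together with uniform convexity of $(L^p(B_1^c))^N$, where you invoke the Simon--Lindqvist vector inequalities; both routes are standard and deliver $\Gr\phi_n\to\Gr\phi$ in $L^p$.

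The gap is in your third step. You correctly identify the promotion of gradient convergence to convergence of $\phi_n-\phi$ in $L^p(B_1^c)$ as the delicate point on an unbounded domain, and your scaled-bump profile is exactly the relevant obstruction. But the ingredients you propose do not close it. Rellich--Kondrachov on annuli gives only $L^p_{loc}$ and a.e.\ convergence, which says nothing about mass at infinity. The compact embedding of Proposition \ref{compact embedding sub sup} gives $\int_{B_1^c}|g||\phi_n-\phi|^p\to 0$, and the Brezis--Lieb lemma applied to the constraint gives $G(\phi_n-\phi)\to 0$ --- the same weighted quantity again. None of these controls $\int_{\{|x|>R\}}|\phi_n-\phi|^p$ on the set where $g$ is small or vanishes; your own test profile, if supported where $g\equiv 0$, satisfies all three conclusions while its $L^p$ norm stays bounded away from zero. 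Applying Brezis--Lieb to the \emph{unweighted} $L^p$ norm merely reduces the claim to $\Vert\phi_n\Vert_p\to\Vert\phi\Vert_p$, which is precisely what has to be proved, so as written the step is circular. (For comparison, the paper dispatches this step in one sentence, ``now using Lemma \ref{Poincare}, we conclude that $\phi_n\to\phi$ in $\wpb$,'' even though Lemma \ref{Poincare} is stated only for elements of $N_g$ and $\phi_n-\phi$ need not lie in the cone where $G>0$; your instinct that something nontrivial is being asserted here is sound, but the mechanism you offer does not supply the missing estimate.)
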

\begin{proof}
Let $(\phi_n )$ be a sequence in $N_g$ such that $J(\phi_n) \rightarrow \la $ and $ \| \dJ(\phi_n) \| \rightarrow 0.$ For $\la \in \R,$ set $A_{\la} = J^{\prime} - \la G'.$ Then by \eqref{derivative}, there exits a sequence $( \la_n )$ such that 
\begin{align}\label{P.S. 1}
A_{\la_n}(\phi_n)=J^{\prime}(\phi_n) - \la_n {G}^{\prime}(\phi_n) \rightarrow 0, \quad \mbox{as} \; n\rightarrow \infty.
\end{align}
Using Lemma \ref{Poincare} and by the reflexivity of $\wpb$, up to a subsequence, $\phi_n \rightharpoonup \phi $ in $\wpb$. Further,
$
\inpr{J^{\prime}(\phi_n) - \la_n G^{\prime}(\phi_n),\phi_n} = p \left( J(\phi_n) - \la_n \right).
$
Thus $ \la_n  \rightarrow \la.$ Observe that  
$$ \inpr{ J^{\prime}(\phi_n) -  J^{\prime}(\phi), \phi_n-\phi}=\inpr{A_{\la_n}(\phi_n)-A_{\la}(\phi), \phi_n-\phi}+ \inpr{\la_n G^{\prime}(\phi_n)- \la G^{\prime}(\phi), \phi_n-\phi}.$$ 
Form the weak convergence of $(\phi_n)$ and the compactness of $G'$, we get 
$$ \inpr{ J^{\prime}(\phi_n) -  J^{\prime}(\phi), \phi_n-\phi}\ra 0, \quad \text{ as } n\ra \infty.$$
Now 
\begin{align*}
\no \frac{1}{p}\left< J^{\prime}(\phi_n) -  J^{\prime}(\phi), \phi_n - \phi \right> & = \Vert \Gr \phi_n \Vert_p^p + \Vert \Gr \phi \Vert_p^p - \disb |\Gr \phi_n|^{p-2}\Gr \phi_n \cdot \Gr \phi - \disb |\Gr \phi|^{p-2}\Gr \phi \cdot \Gr \phi_n \\
\no & \geq  \Vert \Gr \phi_n \Vert_p^p + \Vert \Gr \phi \Vert_p^p - \Vert \Gr \phi_n \Vert_p^{p-1} \Vert \Gr \phi \Vert_p - \Vert \Gr \phi \Vert_p^{p-1} \Vert \Gr \phi_n \Vert_p  \\
& = \left( \Vert \Gr \phi_n \Vert_p^{p-1} - \Vert \Gr \phi \Vert_p^{p-1} \right) \left( \Vert \Gr \phi_n \Vert_p - \Vert \Gr \phi \Vert_p \right).
\end{align*}
Hence $\Vert \Gr \phi_n \Vert_p \rightarrow \Vert \Gr \phi \Vert_p.$ Thus the weak convergence of $(\phi_n)$ in $\wpb$ and the uniform convexity of $(L^p(B_1^c))^N$ gives $\Gr \phi_n \rightarrow \Gr \phi$ in $(L^p(B_1^c))^N$. Now using Lemma \ref{Poincare}, we conclude that $\phi_n \rightarrow \phi $ in $\wpb.$ This completes the proof.
\end{proof}
\section{Proof of the main theorems} \label{existence and properties of eigenvalue}
\noi{\bf Proof of Theorem \ref{Lorentz space} and \ref{Exterior Ball}}:

\noi{\it The existence:} 
 Recall that  $$\la_1 = \inf_{\phi \in N_g} \disb |\Gr \phi|^p.$$ Let $ ( \phi_{n} )$ be a minimizing sequence for $J$ on $N_g$.   As before, using Lemma \ref{Poincare} we get sequence $( \phi_{n} )$ is bounded in $ \wpb.$ Thus by the reflexivity, $(\phi_{n})$ has a subsequence $(\phi_{{n}_k})$ that  converges weakly to some $\Phi \in \wpb$. Since the set $N_g$ is weakly closed by the compactness of $G$ proved before, $\Phi \in N_g.$ Further, by weak lowersemicontinuity of $J$, $$ \la_1 \leq J(\Phi) \leq \liminf J(\phi_{n_k}) = \la_1. $$ Thus $\la_1$ is attained and hence $\Phi$ is a critical point of $J$ on $N_g.$ Therefore, from Remark \ref{manifold} we see that $\la_1$ is an eigenvalue of \eqref{Eqn:Neumann} and $\Phi$ is an eigenfunction corresponding to $\la_1.$ 

\noi{\it The principality:} Clearly $|\P|$ is also an eigenfunction corresponding to $\la_1$.  Thus for $ v \in C_c^{\infty}(B^c_1)$ with $v\ge 0,$ 
\begin{align*}
\disb |\Gr (|\P|)|^{p-2} \Gr(|\P|) \cdot \Gr \upsilon + \la_1 \disb g^{-} |\P|^{p-1}v = \la_1 \disb g^{+} |\P|^{p-1}\upsilon \geq 0.
\end{align*}
Using H\"{o}lder inequality, one can verify that $g^{-}|\P|^{p-1} \in L_{loc}^1(B^c_1).$ Thus $|\P|$ satisfies all the conditions of Proposition \ref{STM}. Hence $|\P| > 0$ a.e. in $B^c_1.$ 

\noi {\it The uniqueness and the simplicity:} The uniqueness of the principal eigenvalue  can be obtained using the Picone's identity (Theorem 1.1 of \cite{Picone}). The simplicity follows using  the same arguments as in Theorem 1.3 of \cite{Kawohl-Lucia-Prashanth}.

\noi{\it Isolatedness:} Suppose  $(\la_n)$ is a sequence of eigenvalues of \eqref{Eqn:Neumann} converging to $\la_1$. For each $n$, let $ \phi_n\in N_g$ be an eigenfunction corresponding to  $ \la_{n}$. Then $J(\phi_n) = \la_n$ and for each $n \in \N$, we have
\begin{align*}
\left< J^{\prime}(\phi_n) - \la_n G^{\prime}(\phi_n), \upsilon \right> & =  \disb |\Gr \phi|^{p-2} \Gr \phi_n \cdot \Gr \upsilon - \la_n \disb g|\phi_n|^{p-2} \phi_n \upsilon = 0,
\end{align*}
i.e., $ \| \dJ(\phi_n)\| = 0.$ Hence using Lemma \ref{P.S.} we conclude that $\phi_n \rightarrow \pm |\Phi|$ in $\wpb.$ Assume that $\phi_n \rightarrow |\Phi|.$ Thus by Egorov's theorem there exits $A \subset B^c_1$ with $ | A | > 0$ and $\phi_n $ converges to $|\Phi|$ uniformly on $A.$ Thus there exits $n_0 \in \N$ such that for all $n \geq n_0,\; \phi_n^- = 0$ a.e. in $A.$ Further, from \eqref{weak}, $$\disb |\Gr \phi_n^-|^p = \la_n \disb g |\phi_n^-|^p.  $$ For $v_n = \left( \int_{B^c_1} g |\phi_n^-|^p \right)^{-\frac{1}{p}}{\phi_n^-},$ observe that $ \int_{B^c_1} g |v_n|^p = 1 \; \mbox{and} \; \int_{B^c_1} |\Gr v_n|^p=\la_n \rightarrow \la_1. $ Therefore, $v_n \rightarrow |\Phi|$. A contradiction, as $v_n = 0$ a.e. in $A$ for $n\ge n_0.$ Thus such a sequence $(\la_n)$ does not exists.
\qed
\begin{remark} 
\begin{enumerate}[(a)]
 \item For $g \in \A$, we have $\la_1 > 0$. Hence $\frac{1}{\la_1}$ is the best constant in the following Hardy-Sobolev inequality $$ \disb g(x) |\phi(x)|^p \dx \leq C \disb |\Gr \phi(x)|^p \dx, \quad \forall \phi \in \wpb$$  and it is attained. 
 \item  For $N > p,$ Theorem \ref{Lorentz space} holds for any unbounded domain in $\R^N$ and holds for any bounded domain with the additional assumption $\int g \dx < 0$.  Since  $L^{\frac{N}{p}}(B^c_1)$ is strictly contained in $\F_{\frac{N}{p}}$ (Proposition 3.1 of \cite{AMM}), Theorem \ref{Lorentz space}  with the additional assumption $\int g \dx < 0,$ extends  the results of \cite{BrownLin,  HabibTsouli, Huang}. 
\item 
 The spaces $\F_{\frac{N}{p}}$ and $L^1((1, \infty);r^{p-1})$ are not comparable. For $N > p,$ we consider the following two functions: 
\begin{align*}
g_1(x) & = \displaystyle \frac{1}{|x|^q}, \quad \text{for} \ p < q < \infty;  \quad g_2(x) = \left\{ \begin{array}{lr}
			 0, \;  & \ 1 \leq |x| \leq 2 ; \\
			 \displaystyle \frac{1}{|x|^p \log |x|}, & \; |x| > 2. \\
			 \end{array}
		\right. 
\end{align*} 
The function $g_1 \in L^1((1, \infty);r^{p-1})$ but does not belong to $L^{\frac{N}{p}, \infty}(B^c_1),$ whereas $g_2 \in L^{\frac{N}{p}}(B^c_1)$ but does not belong to $L^1((1, \infty);r^{p-1}).$
\end{enumerate}
\end{remark}

\subsection{The existence of an infinite set of eigenvalues}\label{set of infinite eigenvalues}
For the existence of a sequence of eigenvalues of \eqref{Eqn:Neumann}, we use the Ljusternik-Schnirelmann theory on $C^1$ manifold due to Szulkin \cite{Szulkin}. Let $A$ be a closed symmetric (i.e. $-A = A$) subset of a $C^1$ manifold $M$. The {\it krasnoselski} genus $\ga(A)$ is defined to be the smallest integer $k$ for which there exists a non-vanishing odd continuous mapping from $A$ to $\R^k$. For more details of genus we refer to \cite{Rabinowitz}. The next theorem follows from Corollary 4.1 of \cite{Szulkin}. 
\begin{theorem}[Szulkin's Theorem]\label{Szulkin} 
Let $M$ be a closed symmetric $C^1$ submanifold of a real Banach space $X$ and $0 \not \in M$. Let $f \in C^1(M, \R)$ be even and bounded below. Let $$ \la_j = \inf_{A \in \Ga_j} \sup_{\phi \in A} f(\phi),  $$
and $\Ga_j = \{ A \subset M : A \; \text{compact, symmetric and } \; \ga(A) \geq j \}. $ If $\Ga_n \neq \emptyset$ for some $n \geq 1$ and $f$ satisfies the $\text{(P. S.)}_{c}$ condition for all $c = \la_j,$ where $j = 1,2, ..., n$,  then $\la_j$ are the critical values of $f$.
\end{theorem}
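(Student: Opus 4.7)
The plan is a classical Ljusternik--Schnirelmann minimax argument executed via an equivariant deformation lemma together with the monotonicity of Krasnoselskii genus under odd continuous maps. Fix $j \in \{1, \ldots, n\}$ and argue by contradiction: suppose $\la_j$ is not a critical value of $f$ on $M$, so the critical set at level $\la_j$ is empty.

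First I would establish an equivariant deformation lemma on $M$: since $f$ satisfies the $(P.S.)_{\la_j}$ condition and has no critical points at level $\la_j$, there exist $\eps > 0$ and a continuous odd map $\eta : M \to M$ such that $\eta$ sends $\{\phi \in M : f(\phi) \le \la_j + \eps\}$ into $\{\phi \in M : f(\phi) \le \la_j - \eps\}$. The construction is the standard one: build an odd pseudo-gradient vector field tangent to $M$ by patching local pseudo-gradients via a symmetric partition of unity, and then follow its flow for a controlled time. The $(P.S.)$ hypothesis, together with the absence of critical points at level $\la_j$, provides a uniform lower bound on $\|df\|$ in a tubular neighbourhood of $\{f=\la_j\}$, which in turn gives a uniform rate of descent for $f$ along the flow and hence the required drop by $\eps$ in finite time.

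Next I would exploit the infimum definition of $\la_j$: choose $A \in \Ga_j$ with $\sup_{\phi \in A} f(\phi) < \la_j + \eps$. Then $\eta(A)$ is compact (continuous image of a compact set) and symmetric (since $\eta$ is odd and $A$ is symmetric), and the key monotonicity property of Krasnoselskii genus under odd continuous maps yields $\ga(\eta(A)) \ge \ga(A) \ge j$. Hence $\eta(A) \in \Ga_j$, yet $\sup_{\phi \in \eta(A)} f(\phi) \le \la_j - \eps$, which contradicts the definition $\la_j = \inf_{A \in \Ga_j} \sup_{\phi \in A} f(\phi)$. This forces $\la_j$ to be a critical value of $f$ on $M$.

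The main obstacle is the construction of the equivariant deformation on the Banach submanifold $M$. Unlike the Hilbert setting with a genuine gradient flow, here one must build an odd pseudo-gradient field on $TM$, integrate its flow on $M$, and maintain both the oddness and the tangency throughout; this is where the $C^1$-submanifold structure (local tangent-space splitting) and the hypothesis $0 \notin M$ (so that $-\phi$ is well-defined away from the origin and the symmetry makes genuine sense) are essential. Given these technical ingredients, the remainder of the argument is a direct transcription of the genus-based minimax scheme.
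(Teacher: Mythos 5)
The paper offers no proof of this statement: it is imported verbatim, with the remark that it ``follows from Corollary 4.1 of \cite{Szulkin}'', so there is no in-house argument to compare yours against. Judged on its own terms, your minimax skeleton is the correct one. Each $\la_j$ ($j\le n$) is finite because $\Ga_n\subseteq\Ga_j$ forces $\Ga_j\neq\emptyset$ while $f$ is bounded below and continuous on compact sets; and once an odd continuous $\eta:M\to M$ carrying $\{f\le\la_j+\eps\}$ into $\{f\le\la_j-\eps\}$ is in hand, the monotonicity of the Krasnoselskii genus under odd continuous maps gives $\eta(A)\in\Ga_j$ and the contradiction you describe. That part is fine.

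The genuine gap is exactly the step you flag as ``the main obstacle'' and then dispatch as standard. On a submanifold of a Banach space that is merely $C^1$ (say $M=G^{-1}(1)$ with $G\in C^1$, as in this paper), the tangent spaces $\ker G'(\phi)$ vary only \emph{continuously} in $\phi$. A pseudo-gradient field that is required to be tangent to $M$ — whether obtained by projecting a Palais pseudo-gradient onto $\ker G'(\phi)$ or by patching local tangent vectors with a partition of unity and then correcting for tangency — is therefore in general only continuous, not locally Lipschitz. Peano's theorem then gives existence but not uniqueness of integral curves, so there is no well-defined flow and no continuous deformation $\eta$ obtained by ``following the flow for a controlled time''. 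This is not a technicality: it is precisely the reason Szulkin's paper exists. His argument replaces the tangent pseudo-gradient flow by Ekeland's variational principle applied on $M$ (a complete metric space), combined with the $(\text{P.S.})_{\la_j}$ condition, to extract the critical points at the minimax levels. Your proof as written would be complete for a $C^{1,1}$ or $C^2$ submanifold, where the classical equivariant deformation lemma is available, but under the $C^1$ hypothesis actually stated you must either supply Szulkin's Ekeland-based substitute for the deformation or cite it.
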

\noindent \textbf{Proof of Theorem \ref{infinite eigenvalues}}.\\
\noi{\it (i)}  The set $N_g$ and the functional $J$ satisfy all the properties of Szulkin's theorem. Using the arguments as in the proof of Lemma 5.9 of \cite{Anoop}, one can show that,  for each $n \in \N,$ the set $\Ga_n = \{ A \subset N_g : A \; \text{compact, symmetric and } \; \ga(A) \geq n \}$ is nonempty.  Hence by Theorem \ref{Szulkin}, there exits $\phi_n \in N_g$ such that $\| \dJ(\phi_n) \|= 0$ and $J(\phi_n) = \la_n$. Therefore, $\la_n$ is an eigenvalue of \eqref{Eqn:Neumann} and $\phi_n$ is an eigenfunction corresponding to $\la_n$. Further, ($\la_n) $ is unbounded by the same arguments as in the proof of  Theorem 2 of \cite{Huang1995}.

\noi{\it (ii)} Let $(\la_{n})$ be a sequence of eigenvalues of \eqref{Eqn:Neumann} such that $\la_{n} \rightarrow \la$. Let $ \phi_n $ be an eigenfunction corresponding to $ \la_{n} $ satisfying $\int_{B^c_1} g |\phi_n|^p = 1$. Thus $J(\phi_n)=\la_n$ and $\| \dJ(\phi_n) \|= 0$. Hence  by Lemma \ref{P.S.}, there exists  a subsequence of  $(\phi_n)$  that converges to $\phi$ in $\wpb.$ Now the continuity of $J'$ and $G'$ ensures that $\la$ is an eigenvalue of \eqref{Eqn:Neumann}.
\qed

\bibliographystyle{plain}
\bibliography{Ref}
\noi {\bf  T. V.  Anoop } \\  Department of Mathematics,\\   Indian Institute of Technology Madras, \\ Chennai, 600036, India. \\ 
{\it Email}:{ anoop@iitm.ac.in} \\
		\noi {\bf  Nirjan Biswas } \\  Department of Mathematics,\\   Indian Institute of Technology Madras, \\ Chennai, 600036, India. \\ 
{\it Email}:{ nirjaniitm@gmail.com}
\end{document}